\theoremstyle{definition}
\newtheorem{Def}{Definition}[section]
\newtheorem{Prop}[Def]{Proposition}
\newtheorem{Lem}[Def]{Lemma}
\newtheorem{Thm}[Def]{Theorem}
\newtheorem{Cor}[Def]{Corollary}
\newtheorem{Rem}[Def]{Remark}
\newtheorem{Ass}[Def]{Assumption}
\newcommand{\p}{\mathbb{P}}
\newcommand{\e}{\mathbb{E}}
\newcommand{\real}{\mathbb{R}}
\newcommand{\n}{\mathbb{N}}
\begin{document}

\title{Strong Rate of Convergence for the Euler-Maruyama Approximation of Stochastic Differential Equations with Irregular Coefficients\footnote{This research has supported by grants of the Japaneses government.}}
\author
{
Hoang-Long Ngo\footnote{Hanoi National University of Education, 136 Xuan Thuy, Cau Giay, Hanoi, Vietnam,   email: $\qquad$ngolong@hnue.edu.vn},
 Dai Taguchi\footnote{Ritsumeikan University, 1-1-1 Nojihigashi, Kusatsu, Shiga, 525-8577, Japan, email: dai.taguchi.dai@gmail.com }
}
\date{}
\maketitle

\begin{abstract}
We consider the Euler-Maruyama approximation for multi-dimensional stochastic differential equations with irregular coefficients. 
We provide the rate of strong convergence where the possibly discontinuous drift coefficient satisfies a one-sided Lipschitz condition and the diffusion coefficient is H\"older continuous and uniformly elliptic.\\\\
\textbf{2010 Mathematics Subject Classification}: 60H35; 41A25; 60H10; 65C30\\\\
\textbf{Keywords}: 
Euler-Maruyama approximation $\cdot$
Strong approximation $\cdot$
Rate of convergence $\cdot$
Stochastic differential equation $\cdot$
Irregular coefficient
\end{abstract}

\section{\large Introduction}
Let us consider the $d$-dimensional stochastic differential equation (SDE)
\begin{align}\label{SDE_1}
X_t=x_0+ \int_0^t b(s, X_s)ds + \int_0^t \sigma(s, X_s)dW_s, \:x_0 \in \real^d,\: t \in [0,T],
\end{align}
where $W:=(W_t)_{0\leq t \leq T}$ is a standard $d$-dimensional Brownian motion on a probability space $(\Omega, \mathcal{F},\p)$ with a filtration $(\mathcal{F}_t)_{0\leq t \leq T}$ satisfying the usual conditions.
The drift coefficient $b$ is a Borel-measurable function from $[0,T] \times \real^d$ into $\real^d$ and the diffusion coefficient $\sigma$ is a Borel-measurable function from $[0,T] \times \real^d$ into $\real^d \times \real^d$.
In this article, we consider that elements of $\real^d$ are column vectors.
The diffusion process $X:=(X_t)_{0 \leq t \leq T}$ is used to model many random dynamical phenomena in many fields of application.

Since the solution of (\ref{SDE_1}) is rarely analytically tractable, one often approximates $X$ by using the Euler-Maruyama scheme given by 
\begin{align}\label{EM_1}
   X_t^{(n)} 
&= x_0 +\int_0^tb\left(\eta_n(s), X_{\eta _n(s)}^{(n)}\right)ds +\int_0^t \sigma\left(\eta _n(s),X_{\eta _n(s)}^{(n)}\right) dW_s,\:\: t \in [0,T],
\end{align}
where $\eta _n(s) = kT/n=:t_k^{(n)}$ if $ s \in \left[kT/n, (k+1)T/n \right)$.
It is well-known that if the coefficients $b$ and $\sigma$ are Lipschitz continuous in space and $1/2$-H\"older continuous in time then the Euler-Maruyama scheme is known to have a strong rate of convergence order $1/2$, i.e. for any $p>0$, there exists $C_p>0$ such that
$$\e \Big[ \sup_{0 \leq t \leq T}|X_t - X^{(n)}_t|^p\Big] \leq C_p n^{-p/2}.$$

The strong rate in the case of non-Lipschitz coefficients have been studied recently by
using the approximation method of Yamada and Watanabe (\cite{Yamada}, Theorem 1) in Gy\"ongy and R\'asonyi \cite{Gyongy}. 
They have proven that  for a one-dimensional SDE, if the diffusion coefficient is $(\alpha + 1/2)$-H\"older continuous in space and the drift is the sum of a Lipschitz and a monotone decreasing $\gamma$-H\"older continuous function then 
\begin{align}\label{result_GR}
\e \Big[  \sup_{0 \leq t \leq T}|X_t - \tilde{X}^{(n)}_t| \Big] \leq \begin{cases} C (\log n)^{-1/2} & \text{ if } \alpha = 0\\ C( n^{-2\alpha^2} + n^{-\alpha \gamma}) &\text{ if } \alpha \in (0,1/2], \end{cases}
\end{align}
where $\tilde{X}$ is the Euler's ``polygonal" approximation of $X$ given by
\begin{align}\label{EM_2}
   \tilde{X}^{(n)}_t
&= x_0 +\int_0^tb\left(s, \tilde{X}_{\eta _n(s)}^{(n)}\right)ds +\int_0^t \sigma\left(s,\tilde{X}_{\eta _n(s)}^{(n)}\right) dW_s,\:\: t \in [0,T].
\end{align}
 Yan \cite{Y} has obtained a similar result to (\ref{result_GR}) for the Euler-Maruyama scheme for the one-dimensional SDE with drift which is Lipschitz continuous in space and H\"older continuous in time by using Tanaka's formula and some estimates for the local time. 
When the drift $b$ is not supposed to be continuous, Halidias et al. (Theorem 3.1 in \cite{HK}) have shown the convergence of Euler-Maruyama approximation in $L^2$-norm (see also Theorem 2.8 in \cite{GK}). Regarding the rates of convergence,  Gy\"ongy  has shown that if $b$ satisfies the one-sided Lipschitz condition (see Definition \ref{OSL_1}) and $\sigma$ is locally Lipschitz then the rate of almost convergence for the Euler-Maruyama's polygonal approximation is of order $1/4$ (see \cite{G98} Theorem 2.6).  Moreover, Bastani et al. have recently proven strong $L^p$-rate $1/4$ for $p\geq 2$ for split-step backward Euler approximations of SDEs with discontinuous drift and Lipschitz continuous diffusion coefficients (see Theorem 5.2 in \cite{BT}). 

Besides the strong approximation problem, the weak approximation  for non-Lipschitz coefficients SDE has also received a lot of attention. The weak rate of the Euler-Maruyama approximation when both drift and diffusion coefficients as well as payoff functions  are H\"older continuous has been studied in \cite{KP, MP, Gyongy}. Kohatsu-Higa et al. studied weak approximation errors for SDE with discontinuous drift by using a perturbation method in \cite{KLY}. The case of locally Lipschitz coefficients has  been studied extensive too, see \cite{HJK} and the references therein.  It should be noted that 
the strong rate of approximation is very useful to implement an effective Multi-level Monte Carlo simulation scheme for approximating expectation of some functionals of $X$ (see \cite{G}).


The goal of this article is to show  that the strong rates obtained in \cite{Gyongy} and \cite{Y} still hold even when $b$ is discontinuous. More precisely, we will investigate the strong rate of the Euler-Maruyama approximation under  the assumption that the diffusion coefficient $\sigma$ is $(\alpha + 1/2)$-H\"older continuous and the drift $b$ is the one-sided Lipschitz and belongs to the class $\mathcal{A}$ of functions which is, roughly speaking, of bounded variation with respect to a Gaussian measure on $\real^d$.  
In particular, our result implies that the Euler-Maruyama approximation has the optimal strong rate $1/2$ in the case of Lipschitz continuous diffusion coefficient and discontinuous drift. Hence our result partly improves the ones in \cite{G98, Gyongy, Y}. 
In this article, Lemma \ref{Lem} is the key estimation. If the drift coefficient $b$ is a Lipschitz continuous function, it is easy to prove this lemma. To obtain the same estimate with discontinuous drift, we use the result of Lemaire and Menozzi which is the Gaussian bound for the density of the Euler-Maruyama approximation (see \cite{Lemaire}, Theorem 2.1).

Finally we note that SDEs with discontinuous drift appear in many applications such as optimal control and interacting infinite particle systems, see e.g. \cite{BSW, CS, KR}. 

Our paper is divided as follows: Section \ref{sec:results} introduces some notations and assumptions for our framework together with the main results. All proofs are deferred to Section \ref{sec:proofs}.


\section{\large Main results}\label{sec:results}


\subsection{Notations and Assumption}
We first introduce the class of functions for the drift coefficient.

\begin{Def}
Let $\mathcal{A}$ be a class of all  bounded measurable functions $\zeta: [0,T] \times \real^d \rightarrow \real$ such that there exists a sequence of functions $(\zeta_N(t,\cdot))_{N\in\n} \subset C^1(\real^d;\real)$ satisfying the following conditions:
\begin{itemize}
\item[$\mathcal{A}$(i)]   For any $L >0$, $\displaystyle \sup_{t \in [0,T]} \int_{|x|\leq L} |\zeta_N (t,x) - \zeta(t,x)|dx \rightarrow 0 $ as $N \to \infty$.
\item[$\mathcal{A}$(ii)]  There exists a positive constant $K$ such that for any $x \in \real^d$,
$$\displaystyle \sup_{t \in [0,T]} \sup_{N\in \n}|\zeta_N(t,x)| \leq K.$$
\item[$\mathcal{A}$(iii)] There exists a positive constant $K$ such that for any $a \in \real^d$ and $u>0$, 
$$\displaystyle \sup_{t \in [0,T]} \sup_{N \in \n} \sum_{i=1}^d\int_{\real^d}\left|\partial_i \zeta_N(t,x+a) \right| \frac{e^{-\frac{|x|^2}{u}}}{u^{(d-1)/2}}dx \leq K(1+ \sqrt{u}),$$
where $\partial_i$ is partial derivative in space.
\end{itemize}
\end{Def} 
We call $(\zeta_N)_{N \in \n}$ an $\mathcal{A}$-approximation sequence of $\zeta$. This class of function $\mathcal{A}$ is similar to the one introduced in \cite{KMN}. The following proposition shows that this class is quite large. Its proof is deferred to  Section \ref{Appe}. 

\begin{Prop}\label{exp_1}\rm 
(i) If $\xi, \zeta \in \mathcal{A}$ and $\alpha, \beta \in \real$, then $\xi \zeta \in \mathcal{A}$ and $\alpha \xi + \beta \zeta \in \mathcal{A}$.\\
(ii) If $g:[0,T]\times \real^d \rightarrow \real$ is  a bounded measurable function and $g(t, \cdot):\real^d \rightarrow \real$ is monotone in each variable separately then $g \in \mathcal{A}$.\\
(iii) If  $g$ is bounded and Lipschitz continuous in space, then $g \in \mathcal{A}$.
\end{Prop}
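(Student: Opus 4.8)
The plan is to fix once and for all a mollifier $\rho\in C_c^\infty(\real^d)$ with $\rho\ge 0$, $\int_{\real^d}\rho\,dx=1$ and $\mathrm{supp}\,\rho\subset\{|x|\le 1\}$, set $\rho_N(x):=N^d\rho(Nx)$, and use as candidate $\mathcal{A}$-approximation sequence for parts (ii) and (iii) the spatial mollification $\zeta_N(t,\cdot):=g(t,\cdot)\ast\rho_N\in C^\infty(\real^d)$; for part (i) we instead combine the sequences $(\xi_N)_{N\in\n}$ and $(\zeta_N)_{N\in\n}$ supplied by hypothesis. Write $w_u(x):=u^{-(d-1)/2}e^{-|x|^2/u}$ for the weight in $\mathcal{A}$(iii), and recall the Gaussian identities $\int_{\real^d}e^{-|x|^2/u}\,dx=(\pi u)^{d/2}$ and $\int_{\real^d}|x|e^{-|x|^2/u}\,dx=C_d\,u^{(d+1)/2}$ with $C_d:=\int_{\real^d}|z|e^{-|z|^2}\,dz$. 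For part (i), the linear-combination claim is immediate because $\alpha\xi_N+\beta\zeta_N\in C^1$ and each of $\mathcal{A}$(i)--(iii) is stable under linear combinations (for $\mathcal{A}$(iii), bound $|\partial_i(\alpha\xi_N+\beta\zeta_N)|\le|\alpha|\,|\partial_i\xi_N|+|\beta|\,|\partial_i\zeta_N|$ and add the two bounds). For products take $\zeta_N\xi_N$: $\mathcal{A}$(ii) follows from $\|\zeta_N\xi_N\|_\infty\le\|\zeta_N\|_\infty\|\xi_N\|_\infty$; $\mathcal{A}$(i) from the splitting $\zeta_N\xi_N-\zeta\xi=\zeta_N(\xi_N-\xi)+(\zeta_N-\zeta)\xi$ together with the uniform bound on $\|\zeta_N(t,\cdot)\|_\infty$ (from $\mathcal{A}$(ii) for $\zeta$) and the boundedness of $\xi$; and $\mathcal{A}$(iii) from the Leibniz rule $\partial_i(\zeta_N\xi_N)=(\partial_i\zeta_N)\xi_N+\zeta_N(\partial_i\xi_N)$, where replacing $|\xi_N|$ and $|\zeta_N|$ by their $\mathcal{A}$(ii)-bounds reduces the weighted integral to the sum of the two separate $\mathcal{A}$(iii)-bounds, again of the form $K(1+\sqrt u)$.

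For part (iii), suppose $g$ is bounded and $L$-Lipschitz in space uniformly in $t$. Then $\|\zeta_N(t,\cdot)-g(t,\cdot)\|_\infty\le\int_{\real^d}\rho(y)\,|g(t,\cdot-y/N)-g(t,\cdot)|\,dy\le (L/N)\int_{\real^d}|y|\rho(y)\,dy\to 0$ uniformly in $t$, which gives $\mathcal{A}$(i); $\mathcal{A}$(ii) is clear since $\|\zeta_N\|_\infty\le\|g\|_\infty$; and for $\mathcal{A}$(iii), since $g(t,\cdot)$ is a.e. differentiable with $\|\partial_i g(t,\cdot)\|_{L^\infty}\le L$ and $\partial_i\zeta_N=(\partial_i g)\ast\rho_N$, we get $\|\partial_i\zeta_N\|_\infty\le L$, so $\sum_{i=1}^d\int_{\real^d}|\partial_i\zeta_N(t,x+a)|\,w_u(x)\,dx\le dL\int_{\real^d}w_u(x)\,dx=dL\,\pi^{d/2}\sqrt u\le K(1+\sqrt u)$.

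For part (ii), after reflecting the relevant coordinates we may assume $g(t,\cdot)$ is nondecreasing in each variable separately; then $\|\zeta_N\|_\infty\le\|g\|_\infty$ gives $\mathcal{A}$(ii). For $\mathcal{A}$(iii), convolution with the nonnegative kernel $\rho_N$ preserves monotonicity in each variable, so $\partial_i\zeta_N\ge 0$ and the modulus may be dropped; after the substitution $y=x+a$, an integration by parts in $y_i$ (legitimate since $\zeta_N$ is bounded and $w_u$ with its derivative decay at infinity) gives $\int_{\real^d}\partial_i\zeta_N(t,y)\,w_u(y-a)\,dy=\tfrac{2}{u}\int_{\real^d}\zeta_N(t,y)\,(y_i-a_i)\,w_u(y-a)\,dy$, whose absolute value is at most $\tfrac{2\|g\|_\infty}{u}\int_{\real^d}|x|\,w_u(x)\,dx=2\|g\|_\infty C_d$; summing over $i$ yields the required bound. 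For $\mathcal{A}$(i), using $|\zeta_N(t,x)-g(t,x)|\le\int_{\real^d}\rho(y)\,|g(t,x-y/N)-g(t,x)|\,dy$ and Tonelli reduces the claim to the uniform-in-$t$ estimate $\int_{|x|\le L}|g(t,x+h)-g(t,x)|\,dx\le C(L,d)\|g\|_\infty\,|h|$, which I would prove by telescoping $g(t,x+h)-g(t,x)$ along the $d$ coordinate directions and applying to each one-variable increment the elementary inequality $\int_a^b|\phi(s+h)-\phi(s)|\,ds\le 2\|\phi\|_\infty\,|h|$, valid for every bounded monotone $\phi$.

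I expect the bookkeeping in part (i), the mollification estimate in part (iii), and the two Gaussian integrals to be entirely routine. The steps that need genuine care are both in part (ii): first, recognizing that the mollification of a function monotone in each coordinate has a \emph{sign-definite} partial derivative, so that the absolute value in $\mathcal{A}$(iii) disappears and the integration by parts against the Gaussian weight goes through cleanly; and second, that for a bounded monotone function the $L^1_{\mathrm{loc}}$ modulus of continuity is controlled by $\|g\|_\infty$ and the size of the increment \emph{with no dependence on} $t$, which is exactly what is needed to make the convergence in $\mathcal{A}$(i) uniform in $t$.
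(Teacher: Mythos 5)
Your proposal is correct and follows essentially the same mollification strategy as the paper: for parts (ii) and (iii) convolve in space, use the uniform boundedness to get $\mathcal{A}$(ii), exploit the sign-definiteness of $\partial_i\zeta_N$ under monotonicity to drop the absolute value and integrate by parts against the Gaussian weight for $\mathcal{A}$(iii), and reduce $\mathcal{A}$(i) to the $L^1_{\mathrm{loc}}$ modulus of continuity for bounded monotone (resp.\ Lipschitz) functions by telescoping across the $d$ coordinates. The only cosmetic differences are that you choose a compactly supported mollifier where the paper uses a Gaussian, you invoke Rademacher's theorem to bound $\|\partial_i\zeta_N\|_\infty$ in (iii) where the paper differentiates the mollifier and uses the Lipschitz increment directly, and in (i) you spell out the product case (via Leibniz and the splitting $\zeta_N\xi_N-\zeta\xi=\zeta_N(\xi_N-\xi)+(\zeta_N-\zeta)\xi$) which the paper leaves as the one-line remark that $\mathcal{A}$ is a vector space; none of these changes the structure or difficulty of the argument.
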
 
Using Proposition \ref{exp_1} one can easily verify that the class $\mathcal{A}$ contains function $\zeta(x) = |x-a|\wedge 1$ or $\zeta(x) = I_{a < x < b}$ for some $a, b \in \real^d$. 

\begin{Def}\label{OSL_1} 
A function $f :[0,T]\times \real^d \rightarrow \real^d$ is called \emph{one-sided Lipschitz function} in space if there exists a positive constant $K$ such that for any $(t,x, y) \in [0,T]\times \real^{2d}$,
\begin{align}\label{OLS_2}
\langle x-y, f(t,x)- f(t,y) \rangle_{\real^d} \leq K|x-y|^2.
\end{align}
Let $\mathcal{L}$ be the class of all one-sided Lipschitz functions.  
\end{Def} 

\begin{Rem}\label{OSL_3}\rm 
By the definition of the class $\mathcal{L}$, if $f, g \in \mathcal{L}$ and $\alpha \geq 0$, then $f + g$, $ \alpha f \in \mathcal{L}$. 
The one-sided Lipschitz property is closely related to the monotonicity condition introduced in \cite{G98} and the class $\mathcal{L}$ obviously contains all functions which are the sum of a Lipschitz function and a monotone decreasing $\gamma$-H\"older continuous function considered in \cite{Gyongy}. 
\end{Rem} 

Many properties and applications of SDEs with the one-sided Lipschitz drift can be found in \cite{Sch}.
We need the following assumptions on the coefficients $b=(b^{(1)}, \cdots, b^{(d)})^{*}$ and $\sigma=(\sigma_{i,j})_{1\leq i,j\leq d}$. Here $^*$ means transpose for the matrix.
\begin{Ass} \label{Ass_1}\rm 
We assume that the coefficients $b$ and $\sigma $ are measurable functions and satisfy the following conditions:
\begin{itemize}
\item[(i)]   $b \in \mathcal{L}$ and $b^{(i)} \in \mathcal{A}$ for any $i = 1, \ldots, d$.
\item[(ii)]  $a= \sigma \sigma^{*}$ is uniformly elliptic, i.e., there exists $\lambda_0 \geq 1$ such that for any $(t,x,\xi) \in [0,T]\times \real^{2d}$, 
\begin{align*}
\lambda_0^{-1} |\xi|^2 \leq \langle a(t,x)\xi,\xi\rangle _{\real^d} \leq \lambda_0|\xi|^2.
\end{align*}
\item[(iii)]  $\sigma$ is a $(1/2 + \alpha) $-H\"older continuous with $\alpha \in [0, 1/2]$ in space, i.e., there exists $K > 0$ such that 
\begin{align*} 
      \sup_{(t,x,y)\in [0,T]\times \real^{2d}, x \neq y}\frac{|\sigma(t,x)-\sigma(t,y)| }{|x-y|^{1/2+\alpha}} \leq K.
\end{align*}

\end{itemize}
\end{Ass} 

\begin{Rem}
Many functions satisfy Assumption \ref{Ass_1} (i). For example, monotone decreasing function or Lipschitz continuous function.
In particular, for $x \in \real$, the function ${\bf 1}_{(-\infty, 0]}(x) - {\bf 1}_{(0, +\infty)}(x)$ satisfies Assumption (\ref{Ass_1}) (i). This function is the optimal drift coefficient for some stochastic control problem (see \cite{BSW} or \cite{KS} page 437).
From Remark \ref{exp_1} and \ref{OSL_3}, we know that if $ f $ and $ g $ satisfy Assumption \ref{Ass_1} (i) and $\alpha, \beta \geq 0$, then $\alpha f + \beta g$ also satisfies this condition.
Assumption \ref{Ass_1} (ii) implies that the diffusion coefficient $\sigma$ is bounded i.e., for any $(t,x) \in [0,T]\times \real^d$, $|\sigma(t,x)| = \{ \sum_{i,j} \sigma_{i,j}^2(t,x)\}^{1/2} \leq \sqrt{d\lambda_0}$.
\end{Rem}

\begin{Ass} \label{Ass_1b} The coefficients $b$ and $\sigma$ are $\beta$-H\"older continuous with $\beta \geq 1/2$ in time i.e., there exist $K>0$ such that for all $t,s \in [0,T]$ and $x \in \real^d$,
$$|b(t,x) - b(s,x)| + |\sigma(t,x) - \sigma(s,x)| \leq K|t-s|^{\beta}.$$
\end{Ass}
\begin{Rem} 
Veretennikov \cite{V} has shown the following result. Assume that $b$ and $\sigma$ are bounded measurable functions such that $\sigma \sigma^{*}$ is uniformly elliptic. If $\sigma$ is $1/2$-H\"older continuous in $x \in \real$  when $d = 1$ and it is Lipschitz in $x \in \real^d$ when $d \geq 2$, then there exists a unique strong solution to the stochastic differential equation (\ref{SDE_1}) (see also \cite{CE, GK, KR, Krylov, Zv} for other criteria for the existence and uniqueness of solution of SDE with non-Lipschitz coefficients).
\end{Rem} 

\subsection{Main Results}

Through the whole of this article, we will use the positive constants $C$ and $c$ which do not depend on $n$. 
Unless explicitly stated otherwise, the constant $C$ depends only on $K,T,\lambda_0,x_0,\beta$ and $d$, the constant $c$ depends only on $K, \lambda_0, \eta$ and $d$.
Moreover the constants $C$ and $c$ may change from line to line.

We obtain the following results on the rates of the Euler-Maruyama approximation in both $L^1$-norm and $L^1$-$\sup$ norm.


\begin{Thm} \label{Main_1}\rm 
Let Assumptions \ref{Ass_1} and \ref{Ass_1b} hold. Then there exists a constant $C$ such that, for $d=1$, 
\begin{align} \label{est_l1}
\sup_{\tau \in \mathcal{T}}\e[|X_{\tau}-X_{\tau}^{(n)}|] 
&\leq \left\{ \begin{array}{ll}
\displaystyle C(\log n)^{-1} &\textit{if } \alpha = 0,  \\
\displaystyle Cn^{-\alpha} &\textit{if } \alpha \in (0, 1/2],
\end{array}\right.
\end{align}
and for $d\geq 2$,
\begin{align*}
\sup_{\tau \in \mathcal{T}}\e[|X_{\tau}-X_{\tau}^{(n)}|] \leq Cn^{-1/2} &\textit{ if } \alpha = 1/2.
\end{align*}
where $\mathcal{T}$ is the set of all stopping times $\tau \leq T$. 
\end{Thm}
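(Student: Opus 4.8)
\section*{Proof proposal for Theorem \ref{Main_1}}

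The plan is to apply a Yamada--Watanabe-type regularisation to the error process $Z_t:=X_t-X_t^{(n)}$, close an $L^1$ estimate by Gronwall's inequality, invoke Lemma \ref{Lem} for the one term that involves the jumps of $b$, and finally optimise two auxiliary parameters. For $\delta>1$ and $\epsilon\in(0,1)$, fix a continuous kernel $\psi_{\delta,\epsilon}$ supported on $[\epsilon/\delta,\epsilon]$ with $0\le\psi_{\delta,\epsilon}(r)\le\frac{2}{r\log\delta}$ and $\int\psi_{\delta,\epsilon}=1$, and set $V_{\delta,\epsilon}(x):=\int_0^{|x|}\!\int_0^y\psi_{\delta,\epsilon}(z)\,dz\,dy$ for $x\in\real^d$. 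Then $V_{\delta,\epsilon}\in C^2(\real^d)$ with $0\le V_{\delta,\epsilon}$, $|x|-\epsilon\le V_{\delta,\epsilon}(x)\le|x|$, $\nabla V_{\delta,\epsilon}(x)=\phi'_{\delta,\epsilon}(|x|)\tfrac{x}{|x|}$ for a function $0\le\phi'_{\delta,\epsilon}\le1$, and for $d=1$ the second derivative is $\psi_{\delta,\epsilon}$. Writing $\bar{b}_s:=b(\eta_n(s),X^{(n)}_{\eta_n(s)})$ and $\bar{\sigma}_s:=\sigma(\eta_n(s),X^{(n)}_{\eta_n(s)})$, It\^o's formula gives, for any stopping time $\tau\le T$,
\[
\e\big[V_{\delta,\epsilon}(Z_{t\wedge\tau})\big]=\e\!\int_0^{t\wedge\tau}\!\big\langle\nabla V_{\delta,\epsilon}(Z_s),\,b(s,X_s)-\bar{b}_s\big\rangle\,ds+\tfrac12\e\!\int_0^{t\wedge\tau}\!\mathrm{tr}\!\big[(\sigma(s,X_s)-\bar{\sigma}_s)(\sigma(s,X_s)-\bar{\sigma}_s)^{*}\,\mathrm{Hess}\,V_{\delta,\epsilon}(Z_s)\big]\,ds ,
\]
the martingale part having mean zero since its integrand is bounded ($\sigma$ is bounded by Assumption \ref{Ass_1}(ii) and $|\nabla V_{\delta,\epsilon}|\le1$); all moments of $Z$ are finite because $b,\sigma$ are bounded, so $g(t):=\e[V_{\delta,\epsilon}(Z_{t\wedge\tau})]$ is finite and continuous.

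Next I would split both increments as $f(s,X_s)-f(\eta_n(s),X^{(n)}_{\eta_n(s)})=\{f(s,X_s)-f(s,X^{(n)}_s)\}+\{f(s,X^{(n)}_s)-f(\eta_n(s),X^{(n)}_s)\}+\{f(\eta_n(s),X^{(n)}_s)-f(\eta_n(s),X^{(n)}_{\eta_n(s)})\}$. For the drift, the first (spatial) piece is absorbed by the one-sided Lipschitz property, since $\phi'_{\delta,\epsilon}(|Z_s|)/|Z_s|\ge0$ and hence
\[
\big\langle\nabla V_{\delta,\epsilon}(Z_s),\,b(s,X_s)-b(s,X^{(n)}_s)\big\rangle=\tfrac{\phi'_{\delta,\epsilon}(|Z_s|)}{|Z_s|}\big\langle Z_s,\,b(s,X_s)-b(s,X^{(n)}_s)\big\rangle\le K\phi'_{\delta,\epsilon}(|Z_s|)|Z_s|\le KV_{\delta,\epsilon}(Z_s)+K\epsilon ;
\]
the time piece of the drift is $\le K(T/n)^{\beta}$ by Assumption \ref{Ass_1b}. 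For the diffusion, the $(1/2+\alpha)$-H\"older bound gives $|\sigma(s,X_s)-\sigma(s,X^{(n)}_s)|^2\le K^2|Z_s|^{1+2\alpha}$, and $\psi_{\delta,\epsilon}(r)\le\frac{2}{r\log\delta}\mathbf{1}_{\{r\le\epsilon\}}$ makes this contribute $\lesssim\epsilon^{2\alpha}/\log\delta$ for $d=1$; on its support $\psi_{\delta,\epsilon}\le\frac{2\delta}{\epsilon\log\delta}$, so the diffusion time piece contributes $\lesssim\frac{\delta}{\epsilon\log\delta}(T/n)^{2\beta}$ and the spatial EM-increment piece $\{\sigma(\eta_n(s),X^{(n)}_s)-\sigma(\eta_n(s),X^{(n)}_{\eta_n(s)})\}$ contributes $\lesssim\frac{\delta}{\epsilon\log\delta}n^{-(1+2\alpha)/2}$ via the elementary moment bound $\e[|X^{(n)}_s-X^{(n)}_{\eta_n(s)}|^p]\le C_pn^{-p/2}$, which holds because $b$ and $\sigma$ are bounded.

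The remaining, and genuinely hard, term is the spatial EM-increment of the drift, $\e\!\int_0^T|b^{(i)}(\eta_n(s),X^{(n)}_s)-b^{(i)}(\eta_n(s),X^{(n)}_{\eta_n(s)})|\,ds$, where $b$ is only measurable so none of the estimates above applies; this is precisely what Lemma \ref{Lem} supplies, with a bound of order $n^{-1/2}$. Its proof uses $b^{(i)}\in\mathcal{A}$: one approximates $b^{(i)}$ by an $\mathcal{A}$-approximation sequence $(\zeta_N)_{N\in\n}$, expands $\zeta_N$ along the segment joining $X^{(n)}_{\eta_n(s)}$ to $X^{(n)}_s$, conditions on $\mathcal{F}_{\eta_n(s)}$ (along which the increment of $X^{(n)}$ is Gaussian), and integrates the gradient of $\zeta_N$ against the density of $X^{(n)}_s$, controlled by the Aronson-type Gaussian upper bound of Lemaire--Menozzi (\cite{Lemaire}, Theorem 2.1); the integrals that arise are finite thanks to $\mathcal{A}$(ii)--(iii), the passage $N\to\infty$ is justified by $\mathcal{A}$(i), and the finitely many indices with $\eta_n(s)$ near $0$ are absorbed using boundedness of $b$. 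I regard this as the main obstacle: it is exactly the substitute for the Lipschitz bound on the drift EM-increment that would make the whole argument routine.

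Finally I would collect the estimates: $g(t)\le K\int_0^t g(s)\,ds+R_n$ with $R_n$ independent of $\tau$ and $t$ (using $V_{\delta,\epsilon}\ge0$ and nonnegativity of the error integrands to replace $\int_0^{t\wedge\tau}$ by $\int_0^T$), whence Gronwall at $t=T$ together with $|Z_\tau|\le V_{\delta,\epsilon}(Z_\tau)+\epsilon$ gives, using $\beta\ge1/2$,
\[
\sup_{\tau\in\mathcal{T}}\e[|Z_\tau|]\;\lesssim\;\epsilon+n^{-1/2}+\frac{\epsilon^{2\alpha}}{\log\delta}+\frac{\delta}{\epsilon\log\delta}\,n^{-(1+2\alpha)/2} .
\]
For $\alpha\in(0,1/2]$, take $\delta$ a fixed constant and $\epsilon=n^{-1/2}$, which yields $n^{-\alpha}$; for $\alpha=0$, take $\delta=n^{1/4}$ and $\epsilon=n^{-1/4}$, which yields $(\log n)^{-1}$. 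For $d\ge2$ only $\alpha=1/2$ is in scope (the SDE is otherwise not known to have a strong solution); then $\sigma$ is Lipschitz and the off-diagonal part of $\mathrm{Hess}\,V_{\delta,\epsilon}$ only lets one bound the spatial diffusion contribution by $\lesssim\epsilon$ (instead of $\lesssim\epsilon^2/\log\delta$), but with $\delta$ fixed and $\epsilon=n^{-1/2}$ this still gives $n^{-1/2}$.
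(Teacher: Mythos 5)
Your proposal is correct and follows essentially the same route as the paper: Yamada--Watanabe regularisation, absorbing the spatial drift increment via the one-sided Lipschitz condition, invoking Lemma \ref{Lem} for the Euler increment of the drift, and closing with Gronwall; your parameter choices agree for $\alpha\in(0,1/2]$ and your alternative choice $\delta=n^{1/4},\ \varepsilon=n^{-1/4}$ for $\alpha=0$ is equally valid.

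One small imprecision: for $d\geq2$ you claim the off-diagonal Hessian terms are bounded by $\lesssim\varepsilon$, but that is not how those terms are controlled. The off-diagonal contribution to the quadratic-variation integral has the form $\int_0^t \frac{\phi'_{\delta,\varepsilon}(|Y^{(n)}_s|)}{|Y^{(n)}_s|}\,|Y^{(n)}_s|^{1+2\alpha}\,ds \leq \int_0^t |Y^{(n)}_s|^{2\alpha}\,ds$, which when $\alpha=1/2$ is $\int_0^t |Y^{(n)}_s|\,ds$ and must be absorbed into the Gronwall coefficient rather than bounded by $\varepsilon$; this is precisely why the $d\geq2$ argument requires $\alpha=1/2$ (so the exponent is $1$ and Gronwall closes the loop). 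Your final conclusion is unaffected because you do invoke Gronwall, but the intermediate claim as stated would not survive scrutiny.
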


\begin{Thm} \label{Main_2} \rm 
Under Assumptions \ref{Ass_1} and \ref{Ass_1b} we have for $d=1$,
\begin{align} \label{est_l1sup}
\e\left[\sup _{0\leq t \leq T}|X_t - X_t^{(n)}|\right]
&\leq \left\{ \begin{array}{ll}
\displaystyle C(\log n)^{-1/2} &\textit{if } \alpha = 0,  \\
\displaystyle Cn^{-2\alpha^2} &\textit{if } \alpha \in (0, 1/2],
\end{array}\right.
\end{align}
and for $d\geq 2$,
\begin{align*} 
\e\left[\sup _{0\leq t \leq T}|X_t - X_t^{(n)}|\right]
\leq Cn^{-1/2} &\textit{  \;\;\;\;if } \alpha = 1/2.
\end{align*}
\end{Thm}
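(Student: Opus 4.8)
The plan is to handle all regimes with a single Itô estimate for a Yamada--Watanabe regularization of the error, feeding in Theorem \ref{Main_1} as the essential input. Write $Z_t := X_t - X_t^{(n)}$, so $Z_0=0$; since $b$ (being in $\mathcal{A}$) and $\sigma$ are bounded, the usual a priori bounds give $\e[\sup_{t\le T}|Z_t|^2]<\infty$, so all manipulations below are legitimate. Fix $\varepsilon\in(0,1)$ and $\delta>1$ and let $\phi=\phi_{\varepsilon,\delta}$ be the standard Yamada--Watanabe function: an even nonnegative $C^2$ function on $\real$ with $\phi\equiv0$ on $[-\varepsilon/\delta,\varepsilon/\delta]$, $r\phi'(r)\ge0$, $0\le\phi'\le1$, $|r|-\varepsilon\le\phi(|r|)\le|r|$, and $0\le\phi''(r)\le\frac{2}{|r|\log\delta}\mathbf{1}_{\{\varepsilon/\delta\le|r|\le\varepsilon\}}$. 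Because $\phi$ vanishes near $0$ together with its first two derivatives, $g(z):=\phi(|z|)$ lies in $C^2(\real^d)$, so Itô's formula applies to $g(Z_t)$ with no local-time obstruction at the origin even though $Z_0=0$. Using $g(Z_0)=0$, the one-sided Lipschitz bound of Assumption \ref{Ass_1}(i) for $b(s,X_s)-b(s,X_s^{(n)})$ together with $\phi'(|Z_s|)/|Z_s|\ge0$, and the elementary inequality $\mathrm{tr}(D^2g(z)\,AA^{*})\le\big(\phi''(|z|)+\phi'(|z|)/|z|\big)|A|^2$ for the second-order part, one obtains after taking $\sup_{t\le u}$ and expectations
\[ \e\Big[\sup_{t\le u}|Z_t|\Big]\le\varepsilon+\e\Big[\sup_{t\le u}|M_t|\Big]+K\int_0^u\e|Z_s|\,ds+R_n+KTn^{-\beta}+\tfrac12\,\e\!\int_0^u\big(\phi''(|Z_s|)+\tfrac{\phi'(|Z_s|)}{|Z_s|}\big)|A_s|^2\,ds, \]
where $A_s:=\sigma(s,X_s)-\sigma(\eta_n(s),X_{\eta_n(s)}^{(n)})$, $M_t:=\int_0^t(\nabla g(Z_s))^{*}A_s\,dW_s$, and $R_n:=\e\!\int_0^T|b(s,X_s^{(n)})-b(s,X_{\eta_n(s)}^{(n)})|\,ds$ is exactly the quantity controlled by Lemma \ref{Lem} (each $b^{(i)}\in\mathcal{A}$), giving $R_n\le Cn^{-1/2}$; the term $KTn^{-\beta}$ accounts for the time-discontinuity of $b$ via Assumption \ref{Ass_1b}, and the $K\int_0^u\e|Z_s|\,ds$ term, coming from the one-sided Lipschitz part, will be closed by Gronwall.

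The decisive estimate is the second-order term. By Assumption \ref{Ass_1}(iii), $|A_s|^2\le C(|Z_s|^{1+2\alpha}+|X_s^{(n)}-X_{\eta_n(s)}^{(n)}|^{1+2\alpha}+n^{-2\beta})$. For $\alpha\in(0,1/2]$ I take $\delta$ equal to a fixed constant $\delta_0$, so that $\phi''(|Z_s|)+\phi'(|Z_s|)/|Z_s|\le c_0/|Z_s|$ on its support, and then the key contribution is
\[ \e\!\int_0^u\tfrac{c_0}{|Z_s|}\,|Z_s|^{1+2\alpha}\,ds=c_0\,\e\!\int_0^u|Z_s|^{2\alpha}\,ds\le c_0\int_0^u\big(\e|Z_s|\big)^{2\alpha}\,ds\le c_0T\Big(\sup_{\tau\in\mathcal{T}}\e|Z_\tau|\Big)^{2\alpha}\le Cn^{-2\alpha^2}, \]
where concavity of $r\mapsto r^{2\alpha}$ (valid since $2\alpha\le1$) makes Jensen's inequality lossless against the $L^1$-rate of Theorem \ref{Main_1}; this is precisely where the exponent $2\alpha^2=\alpha\cdot2\alpha$ is produced. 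The two remaining pieces of $|A_s|^2$ contribute at most $C\varepsilon^{-1}n^{-(1+2\alpha)/2}$ and $C\varepsilon^{-1}n^{-2\beta}$, since $|Z_s|\ge\varepsilon/\delta_0$ on $\mathrm{supp}\,\phi''$ and the one-step Euler increment satisfies $\e|X_s^{(n)}-X_{\eta_n(s)}^{(n)}|^{p}\le Cn^{-p/2}$. When $\alpha=0$ one cannot keep $\delta$ bounded ($c_0\e\!\int|Z_s|^{0}ds=c_0T$ has no rate); instead $\delta\to\infty$ with $n$, so this term becomes $\le CT/\log\delta$, while $\varepsilon$ is kept as a separate small parameter.

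Finally treat $\e[\sup_{t\le u}|M_t|]$. By Burkholder--Davis--Gundy and $|\nabla g|\le1$, $\e[\sup_{t\le u}|M_t|]\le C\,\e[\langle M\rangle_u^{1/2}]$ with $\langle M\rangle_u\le C\int_0^u(|Z_s|^{1+2\alpha}+|X_s^{(n)}-X_{\eta_n(s)}^{(n)}|^{1+2\alpha}+n^{-2\beta})\,ds$; writing $\int_0^u|Z_s|^{1+2\alpha}ds\le(\sup_{s\le u}|Z_s|)^{2\alpha}\int_0^u|Z_s|\,ds$ and applying Cauchy--Schwarz and Young's inequality gives, for any $\theta>0$,
\[ \e\Big[\sup_{t\le u}|M_t|\Big]\le\theta\,\e\Big[\sup_{s\le u}|Z_s|\Big]+C_\theta\Big(\e\!\int_0^u|Z_s|\,ds\Big)^{\frac{1}{2(1-\alpha)}}+Cn^{-\frac{1+2\alpha}{4}}+Cn^{-\beta}, \]
and Theorem \ref{Main_1} bounds the middle term by $Cn^{-\alpha/(2(1-\alpha))}$; since $(2\alpha-1)^2\ge0$ gives $\alpha/(2(1-\alpha))\ge2\alpha^2$ and likewise $(1+2\alpha)/4\ge2\alpha^2$, the whole martingale contribution is $\le\theta\,\e[\sup_{s\le u}|Z_s|]+Cn^{-2\alpha^2}$ (for $\alpha=0$ one uses Jensen directly, $\e[\langle M\rangle_u^{1/2}]\le C(\sup_\tau\e|Z_\tau|)^{1/2}+\cdots\le C(\log n)^{-1/2}$, which is the source of the square-root loss in that regime). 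Collecting everything, taking $\theta$ small, choosing $\varepsilon\sim n^{-2\alpha^2}$ (and $\delta=e^{(\log n)^{1/2}}$, $\varepsilon\sim(\log n)^{-1/2}$ when $\alpha=0$) so all $\varepsilon$- and $\delta$-dependent terms are at most the target rate, and absorbing $\theta\,\e[\sup_{s\le u}|Z_s|]$, one reaches $\e[\sup_{t\le u}|Z_t|]\le Cr_n+C\int_0^u\e[\sup_{r\le s}|Z_r|]\,ds$ with $r_n$ the claimed rate, and Gronwall's lemma concludes. The case $d\ge2$, $\alpha=1/2$ is the same computation with $\sigma$ now Lipschitz, so that $|A_s|^2\le C(|Z_s|^2+\cdots)$, the term $c_0\e\!\int|Z_s|\,ds$ is again of Gronwall size, and $R_n$ and the Euler-increment terms yield $n^{-1/2}$; this is the only higher-dimensional regime in which Theorem \ref{Main_1} supplies the needed pointwise bound. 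The crux throughout is the second-order term: one must regularize so that $\phi''(|z|)$ is of exact order $1/|z|$ — no smaller — so that the $(1/2+\alpha)$-H\"older bound on $\sigma$ converts it into $|Z_s|^{2\alpha}$ with $2\alpha\le1$, at which point Jensen plus Theorem \ref{Main_1} lose nothing; everything else is careful but routine discretization bookkeeping, with the discontinuity of $b$ entering only through the already-established Lemma \ref{Lem}.
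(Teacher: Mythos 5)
Your proof is correct and rests on the same skeleton as the paper's: regularize $|z|$ by the Yamada--Watanabe function $\phi_{\delta,\varepsilon}$, apply It\^o, use the one-sided Lipschitz condition to handle $b(s,X_s)-b(s,X_s^{(n)})$, handle the discontinuity via Lemma~\ref{Lem}, estimate the martingale with Burkholder--Davis--Gundy, produce the exponent $2\alpha^2$ by combining Young with Jensen and the $L^1$-rate of Theorem~\ref{Main_1}, and close by Gronwall. But you take a genuinely different route in two places, and it is worth noting where. First, for the second-order It\^o term you drop the support indicator in the bound $\phi''(r)\le\frac{2}{r\log\delta}\mathbf{1}_{[\varepsilon/\delta,\varepsilon]}$ and, keeping $\delta$ fixed, estimate $\e\int_0^u|Z_s|^{2\alpha}\,ds$ directly by Jensen (concavity of $r\mapsto r^{2\alpha}$ for $2\alpha\le1$) and Theorem~\ref{Main_1}, giving $n^{-2\alpha^2}$. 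The paper keeps the indicator, uses $|Z_s|\le\varepsilon$ to get $\varepsilon^{2\alpha}/\log\delta$, and with $\delta=2$, $\varepsilon=n^{-1/2}$ obtains the sharper $n^{-\alpha}$; in the paper the second-order term is therefore \emph{not} the bottleneck for the rate $n^{-2\alpha^2}$, whereas in your version it is. This also explains your different $\varepsilon$-choice, $\varepsilon\sim n^{-2\alpha^2}$ versus the paper's $\varepsilon=n^{-1/2}$; both are admissible once one checks, as you do, that $\varepsilon^{-1}\e\int|U_s^{(n)}|^{1+2\alpha}ds$ and $\varepsilon^{-1}n^{-2\beta}$ are within the target rate. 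Second, for the martingale term you factor $\big(\int_0^u|Z_s|^{1+2\alpha}\big)^{1/2}\le(\sup_{s\le u}|Z_s|)^{\alpha}\big(\int_0^u|Z_s|\big)^{1/2}$ and apply weighted Young with exponents $1/\alpha$, $1/(1-\alpha)$ together with Jensen (concavity of $r\mapsto r^{1/(2(1-\alpha))}$), yielding $n^{-\alpha/(2(1-\alpha))}$, which is at least as good as $n^{-2\alpha^2}$ since $(2\alpha-1)^2\ge0$; the paper instead factors $(V_t^{(n)})^{1/2}\big(\int_0^t|Y_s^{(n)}|^{2\alpha}\big)^{1/2}$, uses Young with exponents $2,2$ and Jensen on $r\mapsto r^{2\alpha}$, obtaining exactly $n^{-2\alpha^2}$. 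Both are valid; your route produces a slightly tighter martingale bound at the price of a slightly weaker second-order bound, so the final rate is the same. One minor inaccuracy: for $d=1$ the Hessian of $\Phi_{\delta,\varepsilon}(x)=\phi_{\delta,\varepsilon}(|x|)$ reduces to $\phi''_{\delta,\varepsilon}(|x|)$, with no $\phi'_{\delta,\varepsilon}(|x|)/|x|$ contribution, so lumping $\phi''+\phi'/|z|\le c_0/|z|$ is superfluous there; it is harmless because your bound covers both, and it is needed anyway for the $d\ge2$, $\alpha=1/2$ case, which you treat correctly.
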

The following theorem provides the bound of the error in $L^p$-norm which is useful to design a Multi-level Monte Carlo approximation scheme. 
\begin{Thm} \label{Main_3} 
Let Assumptions \ref{Ass_1} and \ref{Ass_1b} hold. Then for any $p \geq 2$, there exists a constant $C=C(K,T,\lambda_0,x_0,d,\beta,p)$ such that for $d=1$,
\begin{align} \label{est_lp}
\e\left[\sup_{0\leq t \leq T}\left|X_t-X_t^{(n)}\right|^p\right] 
&\leq \left\{ \begin{array}{ll}
\displaystyle C\left( \log n \right)^{-1} &\textit{if $ \alpha = 0$},  \\
\displaystyle Cn^{-\alpha} &\textit{if }\alpha \in (0, 1/2],
\end{array}\right.
\end{align}
and for $d\geq 2$,
\begin{align*} 
     \e\left[\sup_{0\leq t \leq T}\left|X_t-X_t^{(n)}\right|^p\right] 
\leq Cn^{-1/2} &\textit{  \;\;\;\;if } \alpha = 1/2.
\end{align*}
\end{Thm}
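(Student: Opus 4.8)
The plan is to bound $g(t):=\mathbb{E}\big[\sup_{s\le t}|X_s-X_s^{(n)}|^p\big]$ for $t\in[0,T]$ by an Itô-formula-plus-Gronwall argument and then take $t=T$. Write $Y_t:=X_t-X_t^{(n)}$, so $dY_t=\beta_t\,dt+\gamma_t\,dW_t$ with $\beta_t:=b(t,X_t)-b(\eta_n(t),X_{\eta_n(t)}^{(n)})$ and $\gamma_t:=\sigma(t,X_t)-\sigma(\eta_n(t),X_{\eta_n(t)}^{(n)})$. Since $p\ge 2$, the map $y\mapsto|y|^p$ is $C^2$ with Hessian bounded in norm by $C_p|y|^{p-2}$, so Itô's formula applies; combining it with the Burkholder--Davis--Gundy inequality for the martingale part and taking $\sup_{s\le t}$ and expectations gives
\[
g(t)\ \le\ C\,\mathbb{E}\!\Big[\int_0^t|Y_s|^{p-1}|\beta_s|\,ds\Big]
 +C\,\mathbb{E}\!\Big[\Big(\int_0^t|Y_s|^{2(p-1)}|\gamma_s|^2\,ds\Big)^{1/2}\Big]
 +C\,\mathbb{E}\!\Big[\int_0^t|Y_s|^{p-2}|\gamma_s|^2\,ds\Big].
\]
I would then split both $\beta_s$ and $|\gamma_s|$ into three pieces: the difference in the space variable at time $s$, the difference in the space variable between $s$ and $\eta_n(s)$, and the difference in the time variable.

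For the drift, the space-at-$s$ piece $b(s,X_s)-b(s,X_s^{(n)})$ is controlled by Assumption \ref{Ass_1}(i): $b$ is one-sided Lipschitz, so $\langle Y_s,b(s,X_s)-b(s,X_s^{(n)})\rangle\le K|Y_s|^2$, which after pairing with $|Y_s|^{p-2}Y_s$ contributes at most $C\int_0^t g(s)\,ds$ -- exactly a Gronwall term. The space-freezing piece $b(s,X_s^{(n)})-b(s,X_{\eta_n(s)}^{(n)})$ is the one that feels the discontinuity of $b$; here I would invoke the key estimate Lemma \ref{Lem}, namely $\mathbb{E}\big[\int_0^T|b(s,X_s^{(n)})-b(s,X_{\eta_n(s)}^{(n)})|\,ds\big]\le Cn^{-1/2}$ (this is where the class $\mathcal{A}$ and the Gaussian density bound for $X^{(n)}$ enter), together with the boundedness of $b$ and Young's inequality, to bound this piece by $\epsilon\,g(t)+Cn^{-1/2}$. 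The time piece is $\le Cn^{-\beta}\le Cn^{-1/2}$ by Assumption \ref{Ass_1b}.

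The diffusion terms are the crux, and the treatment splits on $\alpha$. If $\alpha=1/2$ (the only case stated for $d\ge 2$), $\sigma$ is Lipschitz in space; then $|\sigma(s,X_s)-\sigma(s,X_s^{(n)})|^2\le K^2|Y_s|^2$ yields Gronwall terms, while $|\sigma(s,X_s^{(n)})-\sigma(s,X_{\eta_n(s)}^{(n)})|^2\le K^2|X_s^{(n)}-X_{\eta_n(s)}^{(n)}|^2$ is handled by the one-step moment bound $\mathbb{E}[|X_s^{(n)}-X_{\eta_n(s)}^{(n)}|^q]\le C_q n^{-q/2}$ (which uses only that $b,\sigma$ are bounded), so the argument is the classical strong $L^p$-rate proof with Lemma \ref{Lem} inserted for the discontinuous drift, giving $n^{-1/2}$. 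If $0\le\alpha<1/2$ (then $d=1$), $\sigma$ is only $(1/2+\alpha)$-H\"older, so $|\sigma(s,X_s)-\sigma(s,X_s^{(n)})|^2\le K^2|Y_s|^{1+2\alpha}$ with exponent $<2$; substituted into the Itô and BDG terms this produces \emph{sublinear} contributions of the type $\int_0^t\mathbb{E}[|Y_s|^{1+2\alpha}(\cdots)]\,ds$, and a Gronwall inequality with a sublinear source alone only gives $g(T)=O(1)$, not a rate. This is the main obstacle. Its resolution -- as in \cite{Yamada} and \cite{Gyongy} -- is to replace $|y|^p$ near the diagonal by (a power of) the Yamada--Watanabe smoothing $\phi_{\epsilon,\delta}$ of $|\cdot|$, for which $\phi_{\epsilon,\delta}''(y)\,|y|^{1+2\alpha}$ is dominated by a constant tending to $0$ as $\delta\downarrow 0$ (of order $(\log(1/\delta))^{-1}$ for $\alpha=0$, a power of $\delta$ for $\alpha\in(0,1/2)$), and then to optimise $\delta=\delta_n$; equivalently, one feeds the already-established $L^1$-rate of Theorem \ref{Main_1} into these nonlinear terms after H\"older-interpolating against the uniform moment bounds $\mathbb{E}[\sup_{s\le T}|Y_s|^q]\le C_q$ (again a consequence of the boundedness of the coefficients). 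Either way the sublinear contributions become $O(\delta_n)$ with $\delta_n=(\log n)^{-1}$ for $\alpha=0$ and $\delta_n=n^{-\alpha}$ for $\alpha\in(0,1/2]$.

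Putting the estimates together one obtains an inequality of the form $g(t)\le C\delta_n+\epsilon\,g(t)+C\int_0^t g(s)\,ds$ on $[0,T]$; since $g$ is a priori finite (by the moment bounds), the term $\epsilon\,g(t)$ is absorbed into the left-hand side, and Gronwall's lemma yields $g(T)\le C\delta_n$, which is the assertion. The only ingredient beyond the classical Lipschitz theory and the Yamada--Watanabe computations already used for Theorems \ref{Main_1} and \ref{Main_2} is Lemma \ref{Lem}; everything else is bookkeeping, and the exponent $p$ enters only through the constant $C$.
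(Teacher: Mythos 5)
Your outline correctly identifies the three main ingredients -- the one-sided Lipschitz structure of $b$, the key drift estimate Lemma~\ref{Lem}, and the Yamada--Watanabe regularisation for the sublinear diffusion contribution -- and your treatment of the $\alpha=1/2$ case is sound. However, for $\alpha<1/2$ the mechanism you describe for handling the sublinear term does not actually close the argument, and this is a genuine gap, not mere bookkeeping. Two problems. First, applying It\^o to $\Phi_{\delta,\varepsilon}(y)^p$ (``a power of the YW smoothing''), as you suggest, does \emph{not} kill the sublinear contribution: the Hessian of $\Phi^p$ contains a term $p(p-1)\Phi^{p-2}(\Phi')^2$ that is not damped by $\psi_{\delta,\varepsilon}$, so the factor $|\gamma_s|^2\lesssim|Y_s|^{1+2\alpha}$ still produces $\e\int_0^t|Y_s|^{p-1+2\alpha}\,ds$, the very obstacle you identified. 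Second, your proposed ``equivalent'' route -- H\"older-interpolating $\e[|Y_s|^{p-1+2\alpha}]$ (or the BDG quantity $\e[(\int|Y_s|^{1+2\alpha}ds)^{p/2}]$) against uniform moment bounds and then plugging in the $L^1$ rate -- loses a factor in the exponent and cannot reproduce $n^{-\alpha}$; interpolation inequalities of the type $\e[|Y|^{r}]\le\e[|Y|]^{\theta}\e[|Y|^{q}]^{1-\theta}$ give $\theta<1$ strictly, hence a strictly worse rate.

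What the paper actually does is different and resolves exactly this point: it does \emph{not} apply It\^o to $|y|^p$ at all. It takes the $|Y_t|$-level inequality already established in the proof of Theorem~\ref{Main_2} (the pathwise bound (\ref{d_ineq_3})/(\ref{ineq_3}) for $V_t^{(n)}=\sup_{s\le t}|Y_s^{(n)}|$, obtained from It\^o applied to $\Phi_{\delta,\varepsilon}(Y_t^{(n)})$), raises that additive inequality to the $p$-th power, estimates $\e[\sup_s|M_s^{\delta,\varepsilon,n}|^p]$ via BDG to get the term $C\,\e[(\int_0^t|Y_s^{(n)}|^{1+2\alpha}ds)^{p/2}]$, and then invokes the stochastic Gronwall lemma of Gy\"ongy and R\'asonyi (Lemma~\ref{Lem2_1}, their Lemma~3.2). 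It is precisely case~(ii) of that lemma, with $r=p$, $q=2$, $\rho=1+2\alpha$, which turns the sublinear BDG term into $C\int_0^T\e[|Y_s^{(n)}|]\,ds$ at full strength, so the $L^1$ rate from Theorem~\ref{Main_1} enters without any loss. This lemma is the decisive tool and cannot be replaced by a na\"ive Young/Gronwall or interpolation step; your sketch is missing it. The case $\alpha=1/2$ uses case~(i) of the same lemma with $\rho=q=2$ and recovers the classical $n^{-1/2}$ as you described.
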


\begin{Rem}
In Theorem \ref{Main_3}, for $\alpha \in [0,1/2)$, the result is the same as in Gy\"ongy and R\'asonyi \cite{Gyongy}. But for $\alpha =1/2$, every moment bigger than $2$ of the error is of the same order. The reason is that we deal with the discontinuous drift coefficients and the estimate of discontinuous part is of order $1/2$ for any $q \geq 1$ (see Lemma \ref{Lem}).
The proof of Theorem \ref{Main_3} does not use the result of Theorem \ref{Main_2} and only use the result of Theorem \ref{Main_1}. On the other hand, for $\alpha \in (0,1/2)$, using Theorem \ref{Main_3} and Jensen's inequality, we can obtain that the rate of convergence order is $\alpha/2$ in $L^1$-sup norm. For $\alpha \in [1/4, 1/2)$, this result is better than Theorem \ref{Main_2} and for $\alpha \in (0, 1/4]$, this result is worse than Theorem \ref{Main_2}.  
\end{Rem}

In many applications such as the regime switching problem, Assumption \ref{Ass_1b} fails to be satisfied. However we are still able to obtain the same strong rates of convergence as above if we consider the polygonal Euler-Maruyama scheme (\ref{EM_2}) instead of the original Euler-Maruyama scheme (\ref{EM_1}).

\begin{Cor} \label{Main_4}
Assume $d=1$ and Assumption \ref{Ass_1}. Then all the estimates (\ref{est_l1}), (\ref{est_l1sup}) and (\ref{est_lp}) still hold when we replace $X_t^{(n)}$ with $\tilde{X}^{(n)}_t$.
\end{Cor}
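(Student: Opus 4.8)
The plan is to revisit the proofs of Theorems \ref{Main_1}, \ref{Main_2} and \ref{Main_3} given in Section \ref{sec:proofs} and to check that they carry over to $\tilde X^{(n)}$ once one observes that Assumption \ref{Ass_1b} is used there \emph{only} to control the error coming from discretising the time variable of the coefficients. Concretely, when one estimates $X_t-X_t^{(n)}$ via \eqref{SDE_1} and \eqref{EM_1}, the drift difference $b(s,X_s)-b(\eta_n(s),X_{\eta_n(s)}^{(n)})$ is split into $b(s,X_s)-b(s,X_{\eta_n(s)}^{(n)})$ plus the ``time-mismatch'' term $b(s,X_{\eta_n(s)}^{(n)})-b(\eta_n(s),X_{\eta_n(s)}^{(n)})$, and similarly for $\sigma$; Assumption \ref{Ass_1b} is invoked precisely to bound these mismatch terms by $2K|s-\eta_n(s)|^{\beta}\le 2K(T/n)^{1/2}$, which is of order $n^{-1/2}$ and hence dominated by every rate appearing in \eqref{est_l1}, \eqref{est_l1sup} and \eqref{est_lp}. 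For the polygonal scheme \eqref{EM_2} the coefficients are evaluated at the true time $s$, so these mismatch terms never appear; the very same computations then give the very same bounds, now with no hypothesis on the time regularity of $b$ and $\sigma$ (the properties of the classes $\mathcal{A}$ and $\mathcal{L}$ used elsewhere in the proofs are already uniform in $t$).

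Before carrying this out I would verify that the analytic inputs of Section \ref{sec:proofs} remain available for $\tilde X^{(n)}$. The uniform moment bounds $\sup_n\e[\sup_{0\le t\le T}|\tilde X_t^{(n)}|^{p}]<\infty$ follow from the boundedness of $b$ and $\sigma$ (Assumption \ref{Ass_1}) and the Burkholder--Davis--Gundy inequality, exactly as for $X^{(n)}$. More importantly, the key Lemma \ref{Lem} rests on the Gaussian upper bound for the transition density of the Euler--Maruyama scheme from \cite{Lemaire}; I claim the analogous bound holds for $\tilde X^{(n)}$. Indeed, conditionally on $\mathcal{F}_{t_k^{(n)}}$ the increment $\tilde X_{t_{k+1}^{(n)}}^{(n)}-\tilde X_{t_k^{(n)}}^{(n)}$ is Gaussian with mean $\int_{t_k^{(n)}}^{t_{k+1}^{(n)}}b(s,\tilde X_{t_k^{(n)}}^{(n)})\,ds$, of size $O(1/n)$ by boundedness of $b$, and covariance $\int_{t_k^{(n)}}^{t_{k+1}^{(n)}}a(s,\tilde X_{t_k^{(n)}}^{(n)})\,ds$, which by Assumption \ref{Ass_1}(ii) still lies between $\lambda_0^{-1}(t_{k+1}^{(n)}-t_k^{(n)})$ and $\lambda_0(t_{k+1}^{(n)}-t_k^{(n)})$. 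Thus each one-step density is Gaussian-bounded with the same constants $\lambda_0,K$ as in the standard case --- crucially, no time regularity of the coefficients is needed --- and the chaining argument of \cite{Lemaire} produces the corresponding global Gaussian bound. Consequently Lemma \ref{Lem} holds verbatim with $X^{(n)}$ replaced by $\tilde X^{(n)}$.

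Granted these two points, the rest of the argument is unchanged: for the $L^1$ and $L^1$-sup estimates one applies the Yamada--Watanabe smoothing functions to $X_t-\tilde X_t^{(n)}$ and controls the drift through the one-sided Lipschitz property (Assumption \ref{Ass_1}(i)) for its ``regular'' part and through the polygonal version of Lemma \ref{Lem} (using $b\in\mathcal{A}$, recalling $d=1$) for its discontinuous part, while the diffusion term is handled by the $(1/2+\alpha)$-H\"older continuity of $\sigma$ (Assumption \ref{Ass_1}(iii)); the $L^p$ bound is then deduced from the $L^1$ bound exactly as in the proof of Theorem \ref{Main_3}. The only step I expect to require genuine care is the transfer of the Gaussian density bound of \cite{Lemaire} to the polygonal scheme sketched in the previous paragraph; once that is in place, \eqref{est_l1}, \eqref{est_l1sup} and \eqref{est_lp} follow for $\tilde X^{(n)}$ by repeating the proofs of Theorems \ref{Main_1}--\ref{Main_3} word for word with the time-mismatch terms simply deleted.
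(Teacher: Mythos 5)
Your argument is essentially the paper's own (very brief) proof: drop Assumption~\ref{Ass_1b} by noting that it enters only through the time-mismatch terms $b(s,\cdot)-b(\eta_n(s),\cdot)$ and $\sigma(s,\cdot)-\sigma(\eta_n(s),\cdot)$, which vanish for the polygonal scheme, transfer the Lemaire--Menozzi Gaussian bound to $\tilde X^{(n)}$, and rerun the proofs of Theorems~\ref{Main_1}--\ref{Main_3} verbatim. One caution on the Gaussian-bound step: uniform two-sided bounds on the eigenvalues of the one-step covariance $\int_{t_k^{(n)}}^{t_{k+1}^{(n)}} a(s,x)\,ds$ are not by themselves enough to rerun the Lemaire--Menozzi chaining when $d\ge 2$, because that argument exploits the factorisation of the covariance as $\Delta\,\sigma(t_k^{(n)},x)\sigma^*(t_k^{(n)},x)$ for a single matrix $\sigma$, which $\int_{t_k^{(n)}}^{t_{k+1}^{(n)}}\sigma(s,x)\sigma^*(s,x)\,ds$ need not admit; it is exactly because this issue is vacuous for $d=1$ (a positive scalar is trivially a square) that Corollary~\ref{Main_4} is restricted to $d=1$, in contrast to Corollary~\ref{Main_5}, whose scheme freezes the time argument of $\sigma$ and therefore works in every dimension.
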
 

\begin{Cor} \label{Main_5}
Let Assumption \ref{Ass_1} and \ref{Ass_1b} be satisfied. Then the conclusion of Theorem \ref{Main_1}, \ref{Main_2} and  \ref{Main_3} still hold if we replace $X_t^{(n)}$ with  $\overline{X}^{(n)}_t$  defined by
\begin{align*}
   \overline{X}^{(n)}_t
&= x_0 +\int_0^tb\left(s, \overline{X}_{\eta _n(s)}^{(n)}\right)ds +\int_0^t \sigma\left(\eta_n(s),\overline{X}_{\eta _n(s)}^{(n)}\right) dW_s,\:\: t \in [0,T].
\end{align*}
\end{Cor}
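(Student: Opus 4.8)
The scheme $\overline{X}^{(n)}$ differs from the Euler--Maruyama scheme $X^{(n)}$ of $(\ref{EM_1})$ only in that the drift coefficient is sampled at the running time $s$ rather than at $\eta_n(s)$; the martingale part is literally the same as that of $X^{(n)}$. The plan is therefore to repeat the proofs of Theorems \ref{Main_1}, \ref{Main_2} and \ref{Main_3} with $\overline{X}^{(n)}$ in place of $X^{(n)}$, checking that every estimate used there remains available. Since the proof of Theorem \ref{Main_3} only relies on Theorem \ref{Main_1} and the proof of Theorem \ref{Main_2} runs parallel to that of Theorem \ref{Main_1}, it suffices to inspect the ingredients entering the proof of Theorem \ref{Main_1}.

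The one genuinely new point is the analogue of the key estimate, Lemma \ref{Lem}, for $\overline{X}^{(n)}$. Its proof for $X^{(n)}$ uses only the Gaussian upper bound of Lemaire and Menozzi (\cite{Lemaire}, Theorem 2.1) for the transition density of the Euler--Maruyama scheme, which in turn requires only uniform ellipticity and boundedness of the diffusion coefficient and boundedness of the drift. The diffusion coefficient of $\overline{X}^{(n)}$, namely $(t,\omega)\mapsto\sigma(\eta_n(t),\overline{X}^{(n)}_{\eta_n(t)})$, is of exactly the same piecewise-frozen, uniformly elliptic, bounded form as that of $X^{(n)}$, and its drift $(t,\omega)\mapsto b(t,\overline{X}^{(n)}_{\eta_n(t)})$ is bounded by $K$ by Assumption \ref{Ass_1}(i); hence \cite{Lemaire}, Theorem 2.1 applies verbatim to $\overline{X}^{(n)}$, and the proof of Lemma \ref{Lem} carries over unchanged to yield, for each $i=1,\dots,d$ and every $q\geq 1$,
\begin{align*}
\sup_{\tau\in\mathcal{T}}\e\Big[\Big|\int_0^{\tau}\big\{b^{(i)}(s,\overline{X}^{(n)}_s)-b^{(i)}(s,\overline{X}^{(n)}_{\eta_n(s)})\big\}\,ds\Big|^{q}\Big]^{1/q}\leq Cn^{-1/2}
\end{align*}
(in whatever precise form Lemma \ref{Lem} is stated). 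The moment bounds $\e[\sup_{0\leq t\leq T}|\overline{X}^{(n)}_t|^p]\leq C$ and $\e[|\overline{X}^{(n)}_t-\overline{X}^{(n)}_{\eta_n(t)}|^p]\leq Cn^{-p/2}$ are immediate from the boundedness of $b$ and $\sigma$.

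With these in hand, set $\overline{Z}_t:=X_t-\overline{X}^{(n)}_t$. The stochastic integral part $\int_0^{\cdot}\{\sigma(s,X_s)-\sigma(\eta_n(s),\overline{X}^{(n)}_{\eta_n(s)})\}\,dW_s$ has exactly the same structure as in the Euler--Maruyama case (recall that $X^{(n)}$ also freezes $\sigma$ at $\eta_n(s)$), so it is handled word for word as in the proof of Theorem \ref{Main_1}: the Yamada--Watanabe smoothing together with Assumption \ref{Ass_1}(iii) for the spatial error, Assumption \ref{Ass_1}(iii) and the increment moment bound for the time-discretization of the space variable, and Assumption \ref{Ass_1b} for the time-discretization of the time variable. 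For the drift we decompose
\begin{align*}
b(s,X_s)-b(s,\overline{X}^{(n)}_{\eta_n(s)})=\big[b(s,X_s)-b(s,\overline{X}^{(n)}_s)\big]+\big[b(s,\overline{X}^{(n)}_s)-b(s,\overline{X}^{(n)}_{\eta_n(s)})\big].
\end{align*}
The first bracket is controlled by the one-sided Lipschitz property $b\in\mathcal{L}$ exactly as for $X^{(n)}$, and the second is precisely the quantity bounded by the analogue of Lemma \ref{Lem} above, hence of order $n^{-1/2}$. Note that, compared with $X^{(n)}$, the drift of $\overline{X}^{(n)}$ produces no time-discretization term at all, so the drift analysis is if anything easier. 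Since $n^{-1/2}$ is dominated by $(\log n)^{-1}$, $(\log n)^{-1/2}$, $n^{-\alpha}$ and $n^{-2\alpha^2}$ for all $\alpha\in[0,1/2]$ and $n$ large, every term reproduces up to a constant the corresponding term in the proofs of Theorems \ref{Main_1}, \ref{Main_2}, \ref{Main_3}, and the stated estimates follow, first the $L^1$-bound at stopping times and then the $L^1$-sup and $L^p$-sup bounds by the same arguments as in the paper.

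The only step requiring genuine attention — and the reason the corollary is not completely immediate — is the verification that the Gaussian density bound of \cite{Lemaire} applies to the mixed scheme $\overline{X}^{(n)}$, whose drift is not piecewise constant in time; I expect this to be the main (and essentially the only) obstacle. Once it is granted, no new phenomenon arises and the remainder is a line-by-line transcription of the earlier proofs.
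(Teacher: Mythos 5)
Your proposal is correct and follows essentially the same route as the paper, whose proof of this corollary is a two-sentence remark: one re-establishes the Gaussian bound of Lemaire and Menozzi for the density of $\overline{X}^{(n)}$ (possible for all $d\geq 1$ because the diffusion coefficient is still frozen at $\eta_n(s)$, so the one-step transitions are Gaussian with the same piecewise-constant covariance structure) and then simply reruns the arguments of Sections 3.4 and 3.5. You correctly identify that the single genuinely new ingredient is the Gaussian density bound for $\overline{X}^{(n)}$ — the paper asserts this with the same brevity you do — and you correctly observe that once this is granted, the analogue of Lemma \ref{Lem} and Lemma \ref{Lem_1} go through verbatim (boundedness of $b$ is the only property of the drift used in either), the drift decomposition $b(s,X_s)-b(s,\overline{X}^{(n)}_{\eta_n(s)})=\{b(s,X_s)-b(s,\overline{X}^{(n)}_s)\}+\{b(s,\overline{X}^{(n)}_s)-b(s,\overline{X}^{(n)}_{\eta_n(s)})\}$ is handled by the one-sided Lipschitz condition and the Lemma-\ref{Lem} analogue (with, as you note, the time-H\"older term of $b$ simply absent), and the diffusion term is treated identically to the original proof. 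The slight tension in your write-up between ``applies verbatim'' and ``the only step requiring genuine attention'' is harmless; both you and the paper take the Gaussian bound on faith from the Lemaire--Menozzi argument rather than re-proving it.
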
 

\section{Proof of the Main Theorems} \label{sec:proofs}
\subsection{Gaussian bound for the density of the Euler-Maruyama scheme}
Under Assumption \ref{Ass_1} (ii) and (iii), it follows from (\cite{Lemaire}, Theorem 2.1) that the transition density $p^{(n)}(s,t,x,x')$ of $X_{t}^{(n)}$ between times $s$ and $t$ exists and there exist constants $C\geq 1$ and $c>0 $ such that for any $x,x' \in \real^d$ and $0 \leq j < j' \leq n$, 
\begin{align}\label{GB_1} 
     C^{-1} p_{c^{-1}}\left(t_{j'}^{(n)}-t_{j}^{(n)},x,x'\right) 
\leq p^{(n)}\left(t_{j}^{(n)},t_{j'}^{(n)},x,x'\right) 
\leq C p_{c}\left(t_{j'}^{(n)}-t_{j}^{(n)},x,x'\right),
\end{align}
where $\displaystyle p_c(t-s,x,x'):= \left(\frac{c}{2\pi (t-s)}\right)^{d/2} \exp\left( -c\frac{|x'-x|^2}{2(t-s)}\right)$.
Note that the constant $C$ depends on $K, \lambda_0, \eta,d,T$ and the constant $c$ depends on $K, \lambda_0, \eta,d$ but not on $T$.

The following lemma plays a crucial role in our argument.
\begin{Lem}\label{GB_3} 
Assume that Assumption \ref{Ass_1} (ii) and (iii) hold. Then there exist $C\geq 1$ and $c>0 $ such that for any $x,x' \in \real^d$, $0 \leq j < j' \leq n$ and $t \in (t_{j'-1}^{(n)}, t_{j'}^{(n)}]$, we have
\begin{align*}
      p^{(n)}\left(t_{j}^{(n)},t,x,x'\right) 
&\leq C p_{c}\left(t-t_{j}^{(n)},x,x'\right).
\end{align*}
\end{Lem}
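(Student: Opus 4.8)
The goal is to extend the Gaussian upper bound (\ref{GB_1}), which is stated only for times on the discretization grid, to an arbitrary time $t$ lying in a subinterval $(t_{j'-1}^{(n)}, t_{j'}^{(n)}]$. The natural approach is a Chapman–Kolmogorov decomposition: I would write
\begin{align*}
p^{(n)}\left(t_{j}^{(n)},t,x,x'\right) = \int_{\real^d} p^{(n)}\left(t_{j}^{(n)},t_{j'-1}^{(n)},x,y\right)\, p^{(n)}\left(t_{j'-1}^{(n)},t,y,x'\right)\, dy,
\end{align*}
valid for $j < j'-1$; the case $j = j'-1$ (so that $t_j^{(n)} = t_{j'-1}^{(n)}$) is handled separately and more directly below. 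The first factor is on the grid, so (\ref{GB_1}) gives $p^{(n)}(t_j^{(n)},t_{j'-1}^{(n)},x,y) \le C\, p_c(t_{j'-1}^{(n)} - t_j^{(n)}, x, y)$. The second factor concerns propagation over a single (partial) step of length $t - t_{j'-1}^{(n)} \le T/n$, starting from the grid point $t_{j'-1}^{(n)}$; here the increment $X^{(n)}_t - X^{(n)}_{t_{j'-1}^{(n)}}$ equals $b(t_{j'-1}^{(n)}, X^{(n)}_{t_{j'-1}^{(n)}})(t - t_{j'-1}^{(n)}) + \sigma(t_{j'-1}^{(n)}, X^{(n)}_{t_{j'-1}^{(n)}})(W_t - W_{t_{j'-1}^{(n)}})$, so conditionally on $X^{(n)}_{t_{j'-1}^{(n)}} = y$ it is an affine function of a Gaussian increment. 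Using the uniform ellipticity and boundedness of $\sigma$ (Assumption \ref{Ass_1}(ii)–(iii)) and the boundedness of $b$ (Assumption \ref{Ass_1}(i), since $b^{(i)}\in\mathcal{A}$ are bounded), one obtains $p^{(n)}(t_{j'-1}^{(n)}, t, y, x') \le C\, p_{c}(t - t_{j'-1}^{(n)}, y, x')$ after absorbing the bounded drift shift of size $O(1/n)$ into the Gaussian exponent (completing the square, then bounding the leftover by a constant since the shift is much smaller than a typical fluctuation of order $n^{-1/2}$, or at worst using that $n^{-2} \le T/n$).

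The final step is to combine the two Gaussian bounds via the semigroup property of the heat kernel: $\int_{\real^d} p_c(s_1, x, y)\, p_{c}(s_2, y, x')\, dy = p_{c}(s_1 + s_2, x, x')$ when the two kernels share the same constant $c$, and more generally $\int p_{c_1}(s_1,x,y) p_{c_2}(s_2,y,x')\,dy \le C' p_{c'}(s_1+s_2,x,x')$ for a possibly smaller $c'$ and a constant $C'$ depending on $c_1,c_2$ and the ratio $s_1/(s_1+s_2)$ — which here is bounded away from $1$ is not needed, but the ratio $s_2/(s_1+s_2) \le (T/n)/(t - t_j^{(n)})$ can be small, which is fine. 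Since $s_1 + s_2 = (t_{j'-1}^{(n)} - t_j^{(n)}) + (t - t_{j'-1}^{(n)}) = t - t_j^{(n)}$, this yields $p^{(n)}(t_j^{(n)}, t, x, x') \le C\, p_c(t - t_j^{(n)}, x, x')$, which is the claim. For the boundary case $j = j'-1$, the bound on the single partial step described above gives the result directly with no convolution needed.

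The main obstacle is the careful bookkeeping in the convolution of two Gaussian kernels with \emph{different} constants: one must verify that the product $p_{c_1} \star p_{c_2}$ is dominated by $C p_{c'}$ uniformly in the split $s_1 + s_2$ with $s_2 \le T/n$, and in particular that the constant does not blow up as $s_2 \downarrow 0$ (it does not — the bound degenerates gracefully to the grid bound). A secondary technical point is justifying the affine-in-Gaussian description of the partial Euler step and the absorption of the bounded drift increment into the exponent without losing the sub-Gaussian form; this is routine given boundedness of $b$ and $\sigma$ and uniform ellipticity, and is exactly the kind of estimate already implicit in the proof of the Lemaire–Menozzi bound (\ref{GB_1}).
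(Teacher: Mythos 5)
Your proposal is correct and follows essentially the same route as the paper: Chapman--Kolmogorov at the grid point $t_{j'-1}^{(n)}$, the grid-to-grid bound from (\ref{GB_1}) for the first factor, an explicit Gaussian bound for the single frozen-coefficient Euler substep (absorbing the bounded drift shift via $|x-y|^2 \ge \tfrac12|x|^2-|y|^2$ and uniform ellipticity), and convolution of the two Gaussian kernels over $s_1+s_2 = t - t_j^{(n)}$. The concern you raise about convolving Gaussians with two different constants is handled in the paper simply by writing both bounds with the same $c$ (one can always take the smaller of the two, enlarging $C$), so the semigroup identity applies exactly; your separate treatment of the boundary case $j=j'-1$ is a small refinement the paper leaves implicit.
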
 
\begin{proof} \rm 
Note that for any $u \in \real^d$,
\begin{align*}
  &p^{(n)}\left(t_{j'-1}^{(n)},t,u,x'\right)
= \left(\frac{1}{2\pi}\right)^{d/2} \frac{1}{\sqrt{(t-t_{j'-1}^{(n)})^d\det a(t_{j'-1}^{(n)},u)}}\\
&\times \exp\left[-\frac{\langle a^{-1}(t_{j'-1}^{(n)},u)(x'-u-(t-t_{j'-1}^{(n)})b(t_{j'-1}^{(n)},u)),(x'-u-(t-t_{j'-1}^{(n)})b(t_{j'-1}^{(n)},u)) \rangle_{\real^d}}{2(t-t_{j'-1}^{(n)})}\right].
\end{align*}
Since $a^{-1}$ is uniformly elliptic, using the inequality $|x-y|^2\geq \frac{1}{2}|x|^2 - |y|^2$ for any $x,y \in \real^d$, we obtain
\begin{align*} 
& -\frac{\langle a^{-1}(t_{j'-1}^{(n)},u)(x'-u-(t-t_{j'-1}^{(n)})b(t_{j'-1}^{(n)},u)),(x'-u-(t-t_{j'-1}^{(n)})b(t_{j'-1}^{(n)},u)) \rangle_{\real^d}}{2(t-t_{j'-1}^{(n)})} \\
&\leq -\frac{\lambda_0^{-1} |x'-u-(t-t_{j'-1}^{(n)})b(t_{j'-1}^{(n)},u)|^2}{2(t-t_{j'-1}^{(n)})} \\
&\leq -\frac{(2\lambda_0)^{-1} |x'-u	|^2}{2(t-t_{j'-1}^{(n)})} + \frac{\lambda_0^{-1} (t-t_{j'-1}^{(n)})^2|b(t_{j'-1}^{(n)},u)|^2}{2(t-t_{j'-1}^{(n)})}  
 \leq -c\frac{|x'-u	|^2}{2(t-t_{j'-1}^{(n)})}+ C.\\
\end{align*}
Hence we have
\begin{align*}
      p^{(n)}\left(t_{j'-1}^{(n)},t,u,x'\right)
\leq  C \frac{\exp\left[-c\frac{|x'-u|^2}{2(t-t_{j'-1}^{(n)})} \right]}{\left(2\pi(t-t_{j'-1}^{(n)})\right)^{d/2} }.
\end{align*}
This estimate together with the Chapman-Kolmogorov equation and (\ref{GB_1}) yield	
\begin{align*} 
      &p^{(n)}\left(t_{j}^{(n)},t,x,x'\right) 
 =\int_{\real^d} p^{(n)}\left(t_{j}^{(n)},t_{j'-1}^{(n)},x,u\right) p^{(n)}\left(t_{j'-1}^{(n)},t,u,x'\right) du\\
&\leq C \int_{\real^d}
 \frac{\exp\left[-c\frac{|u-x|^2}{2(t_{j'-1}^{(n)} - t_{j}^{(n)})} \right]}{\left(2\pi(t_{j'-1}^{(n)} - t_{j}^{(n)})\right)^{d/2} } 
\frac{\exp\left[-c\frac{|x'-u|^2}{2(t-t_{j'-1}^{(n)})} \right]}{\left(2\pi(t-t_{j'-1}^{(n)})\right)^{d/2} }du 
= C p_{c}\left(t-t_{j}^{(n)},x,x'\right).
\end{align*}
We therefore obtain the desired estimate.
\end{proof}

\begin{Cor}
Let $p_t^{(n)}$ be a density for $X_t^{(n)}$. From Lemma \ref{GB_3} with $j=0$, there exist $C \geq 1$ and $c>0$ such that for any $t \in (0,T] $ and $x \in \real^d$ we have 
\begin{align}\label{GB_EM} 
p_t^{(n)}(x) = p^{(n)}( 0,t ,x_0, x ) \leq C p_c(t,x_0,x).
\end{align}
\end{Cor}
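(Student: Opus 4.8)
The plan is to obtain (\ref{GB_EM}) as an immediate specialization of Lemma \ref{GB_3} to the index $j=0$. Two elementary points need to be recorded. First, since the Euler--Maruyama scheme (\ref{EM_1}) is initialized at the deterministic point $X_0^{(n)}=x_0$, the law of $X_t^{(n)}$ is precisely the transition law of the scheme from $(0,x_0)$ to time $t$; hence its density satisfies $p_t^{(n)}(x)=p^{(n)}(0,t,x_0,x)$ for Lebesgue-almost every $x\in\real^d$, where the transition density on the right exists by the discussion preceding Lemma \ref{GB_3} (which uses only Assumption \ref{Ass_1} (ii) and (iii)). Second, for a fixed $t\in(0,T]$ one must exhibit an admissible pair of indices: take $j=0$ and let $j'\in\{1,\dots,n\}$ be the unique integer with $t\in(t_{j'-1}^{(n)},t_{j'}^{(n)}]$, i.e. $j'=\lceil nt/T\rceil$. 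Then $0=j<j'\leq n$ and $t\in(t_{j'-1}^{(n)},t_{j'}^{(n)}]$, so the hypotheses of Lemma \ref{GB_3} are met.

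Applying Lemma \ref{GB_3} with this choice and using $t_0^{(n)}=0$ then gives
\begin{align*}
p_t^{(n)}(x)=p^{(n)}\bigl(t_0^{(n)},t,x_0,x\bigr)\leq C\,p_c\bigl(t-t_0^{(n)},x_0,x\bigr)=C\,p_c(t,x_0,x),
\end{align*}
which is exactly (\ref{GB_EM}); the constants $C\geq 1$ and $c>0$ are inherited from Lemma \ref{GB_3} and therefore have the stated dependence on the parameters.

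There is no genuine obstacle in this corollary: it is a direct consequence of the lemma. The only mild point of care is that the half-open interval $(t_{j'-1}^{(n)},t_{j'}^{(n)}]$ contains its right endpoint, so that the boundary cases $t=t_{j'}^{(n)}$ (in particular $t=T$) are covered by the same argument, together with the trivial identification of the density of $X_t^{(n)}$ with the transition density coming from the deterministic start.
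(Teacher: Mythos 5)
Your proof is correct and follows exactly the route the paper intends: the paper gives no separate argument beyond citing Lemma \ref{GB_3} with $j=0$, and you have simply made explicit the admissible choice of $j'$ and the identification $p_t^{(n)}(x)=p^{(n)}(0,t,x_0,x)$ coming from the deterministic start $X_0^{(n)}=x_0$.
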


\subsection{Some auxiliary estimates}
In this section, we give a key estimation (Lemma \ref{Lem}) to prove the main theorems. 

\begin{Lem}\label{GB_2} 
Let $ \zeta: [0,T] \times \mathbb{R}^d \to \mathbb{R}$ be a bounded measurable function and $(\zeta_N)_{N \in\n}$ be a sequence of functions satisfying $\mathcal{A}$(i) for $\zeta$.
Let $(Y_t)_{ 0  \leq t \leq T }$ be 
a $d$-dimensional stochastic process with $Y_0=y_0 \in \real^d$.
Suppose that $Y_t$ satisfies the Gaussian bound condition on $[\kappa,T]$ for some $\kappa \in (0,T]$, i.e., there exist positive constants $C$ and $c$ such that  
\begin{equation} \label{GB_Y} 
p_t(y) \leq C \frac{e^{\frac{-c|y-y_0|^2}{t}}}{t^{d/2}}, \quad t\in [\kappa,T],
\end{equation}
where $p_t$ is the density function of $Y_t$. Then 
\begin{align}\label{GB_2_1} 
\int_{\kappa}^T \e\left[\left|\zeta_N(t,Y_t) - \zeta(t,Y_t)\right|\right]dt \rightarrow 0,\: N \rightarrow \infty.
\end{align}
and if $T/n \geq \kappa$
\begin{align}\label{GB_2_2} 
\int_{\frac{T}{n}}^T \e\left[\left|\zeta_N(t,Y_{\eta_n(t)}) - \zeta(t,Y_{\eta_n(t)})\right|\right]dt \rightarrow 0, \: N \rightarrow \infty.
\end{align}
\end{Lem}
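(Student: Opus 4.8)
The plan is to convert the local $L^1$ convergence in $\mathcal{A}$(i) into convergence of the expectations by a truncation argument, using the Gaussian bound to control the contribution far from $y_0$. Fix $t \in [\kappa, T]$; since $Y_t$ has density $p_t$,
\begin{align*}
\e\left[\left|\zeta_N(t,Y_t) - \zeta(t,Y_t)\right|\right] = \int_{\real^d} \left|\zeta_N(t,y) - \zeta(t,y)\right| p_t(y)\, dy,
\end{align*}
and for a parameter $L > |y_0|$ I split this integral at $|y| = L$. On $\{|y| \le L\}$ the bound (\ref{GB_Y}) gives $p_t(y) \le C t^{-d/2} \le C \kappa^{-d/2}$ uniformly in $t \in [\kappa, T]$, so after integrating in $t$,
\begin{align*}
\int_\kappa^T \int_{|y| \le L} \left|\zeta_N(t,y) - \zeta(t,y)\right| p_t(y)\, dy\, dt \le \frac{C (T-\kappa)}{\kappa^{d/2}} \sup_{t \in [0,T]} \int_{|y| \le L} \left|\zeta_N(t,y) - \zeta(t,y)\right| dy,
\end{align*}
and the right-hand side tends to $0$ as $N \to \infty$ for each fixed $L$ by $\mathcal{A}$(i).

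For the tail $\{|y| > L\}$ I use that $|\zeta_N(t,y) - \zeta(t,y)| \le M$ with $M$ independent of $N$, $t$, $y$ (this is where the uniform sup-bound available for an $\mathcal{A}$-approximation sequence, cf. $\mathcal{A}$(ii), together with the boundedness of $\zeta$, enters), combined with the elementary estimate obtained via the substitution $z = (y - y_0)/\sqrt{t}$:
\begin{align*}
\int_{|y| > L} p_t(y)\, dy \le C \int_{|z| > (L - |y_0|)/\sqrt{t}} e^{-c|z|^2}\, dz \le C \int_{|z| > (L - |y_0|)/\sqrt{T}} e^{-c|z|^2}\, dz,
\end{align*}
valid for every $t \in (0, T]$; the last expression is independent of $t$ and $N$ and vanishes as $L \to \infty$. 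Hence the tail contribution to $\int_\kappa^T \e[\,\cdot\,]\, dt$ is at most $M C (T - \kappa) \int_{|z| > (L-|y_0|)/\sqrt{T}} e^{-c|z|^2}\, dz$, which is arbitrarily small for $L$ large, uniformly in $N$. Combining the two pieces — first fix $L$ so the tail is $< \epsilon/2$, then take $N$ large so the ball term is $< \epsilon/2$ — yields (\ref{GB_2_1}).

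The estimate (\ref{GB_2_2}) follows by the identical argument applied to $Y_{\eta_n(t)}$: when $T/n \ge \kappa$ and $t \in [T/n, T]$ one has $\eta_n(t) \in [\kappa, T]$, so (\ref{GB_Y}) applies to the density $p_{\eta_n(t)}$ of $Y_{\eta_n(t)}$; since $\eta_n(t) \le T$ the tail bound goes through with $\sqrt{\eta_n(t)} \le \sqrt{T}$ in the exponent, while the ball bound again reduces to $\mathcal{A}$(i) after Fubini, using that the first argument of $\zeta_N$ and $\zeta$ is $t$ and $\mathcal{A}$(i) is a supremum over that argument. There is no serious obstacle: the proof is essentially Fubini plus the Gaussian tail estimate, and the only delicate point is that the tail must be small uniformly in $N$, which the a priori bound on $\|\zeta_N\|_\infty$ supplies.
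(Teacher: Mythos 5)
Your proof is correct and takes essentially the same approach as the paper's: both split $\mathbb{R}^d$ into a ball $\{|y|\leq L\}$ and its complement, use $\mathcal{A}$(i) together with the bound $p_t(y)\leq C\kappa^{-d/2}$ on the ball, and use the Gaussian tail decay together with uniform boundedness of $\zeta_N,\zeta$ on the complement. The only cosmetic difference is in the tail estimate: the paper factors $e^{-c|y-y_0|^2/t} = e^{-c|y-y_0|^2/(2t)}e^{-c|y-y_0|^2/(2t)}$ and bounds one factor by $\varepsilon$ for $|y|\geq M$, whereas you substitute $z=(y-y_0)/\sqrt{t}$ and bound the resulting tail integral directly; both are equivalent. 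You are also right that uniform boundedness of the $\zeta_N$ (i.e. $\mathcal{A}$(ii)) is tacitly needed even though the lemma statement only invokes $\mathcal{A}$(i) — the paper's own proof uses this too, and in all applications $(\zeta_N)$ is an $\mathcal{A}$-approximation sequence satisfying all three conditions, so this is a harmless imprecision in the lemma's phrasing rather than a gap in your argument.
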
 
\begin{proof} \rm 
For given $\varepsilon > 0 $, there exists $M \equiv M(\varepsilon,y_0,c) >0$ such that for any $|y| \geq M$,
\begin{align}  \label{boundeps}
e^{-\frac{c|y-y_0|^2}{2t}} \leq e^{-\frac{c|y-y_0|^2}{2T}} < \varepsilon.
\end{align}
From $\mathcal{A}$(i), there exists $N' \equiv N'	(\varepsilon)$ such that for any $N \geq N'$,
\begin{align*}
\int_{|y|<M} |\zeta_N (t,y) - \zeta(t,y)|dy < \varepsilon.
\end{align*}
Therefore for any $N \geq N'$, using the Gaussian bound condition  (\ref{GB_Y}), the uniform boundedness of $\zeta_N$ and $\zeta$, and (\ref{boundeps}), we get
\begin{align*}
      &\int_{\kappa}^T|\e[\zeta_N(t,Y_t) - \zeta(t,Y_t)]| dt
\leq  \int_{\kappa}^T dt \int_{\real^d}dy |\zeta_N(t,y) - \zeta(t,y)| p_t(y)\\
&\leq C \int_{\kappa}^T dt \left( \int_{|y|<M} dy + \int_{|y| \geq M} dy \right)|\zeta_N(t,y) - \zeta(t,y)| \frac{e^{\frac{-c|y-y_0|^2}{t}}}{t^{d/2}}\\
&\leq  C\int_{\kappa}^T \frac{1}{t^{d/2}} dt \int_{|y|<M} dy |\zeta_N(t,y) - \zeta(t,y)|
 +C \int_{\kappa}^T dt \int_{|y|\geq M} dy \frac{e^{\frac{-c|y-y_0|^2}{t}}}{t^{d/2}}\\
&\leq CT\frac{1}{\kappa^{d/2}} \varepsilon + CT\varepsilon \int_{\real^d}  \frac{e^{\frac{-c|y-y_0	|^2}{2t}}}{t^{d/2}}dy \leq C_{T,\kappa} \varepsilon,
\end{align*}
where the constant $C_{T,\kappa}$ depending only on $T$ and $\kappa$.
Hence by letting $\varepsilon $ to $0$, we conclude the proof of (\ref{GB_2_1}).
In the same way, we can show (\ref{GB_2_2}).
\end{proof}

\begin{Cor}\label{exp_3}
Let Assumption \ref{Ass_1} hold and $(b_N^{(i)})$ be an $\mathcal{A}$-approximation sequence of $b^{(i)}$ for each $i=1, \ldots, d$.  Then for any $i=1,\cdots,d$ and $n \in \n$, we have 
\begin{align*} 
   \lim_{N\rightarrow \infty}\int_{\frac{T}{n}}^T\e[| b^{(i)}_N(s, X_s^{(n)}) - b^{(i)}_N(s, X_{\eta_n(s)}^{(n)})|]ds
 = \int_{\frac{T}{n}}^T\e[| b^{(i)}(s, X_s^{(n)}) - b^{(i)}(s, X_{\eta_n(s)}^{(n)})|]ds < +\infty.
\end{align*}
\end{Cor}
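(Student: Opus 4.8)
The plan is to obtain the statement by combining Lemma \ref{GB_2} with the Gaussian bound \eqref{GB_EM} for the density of the Euler--Maruyama scheme, applied both at the grid points and at the running time. First I would fix $i \in \{1,\dots,d\}$ and $n \in \n$, and observe that $b^{(i)} \in \mathcal{A}$ by Assumption \ref{Ass_1}(i), so $(b_N^{(i)})_{N\in\n}$ is an $\mathcal{A}$-approximation sequence; in particular it satisfies $\mathcal{A}$(i) for $\zeta = b^{(i)}$, and it is uniformly bounded by $\mathcal{A}$(ii). I would then take $Y_t := X_t^{(n)}$. By \eqref{GB_EM}, $p_t^{(n)}(x) \leq C p_c(t,x_0,x)$ for all $t\in(0,T]$, which is exactly the Gaussian bound \eqref{GB_Y} required in Lemma \ref{GB_2}, valid on $[\kappa,T]$ for every $\kappa\in(0,T]$; in particular we may take $\kappa = T/n$ so that $T/n \geq \kappa$ holds.

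Next, I would apply \eqref{GB_2_1} of Lemma \ref{GB_2} with $\kappa = T/n$ to get
\begin{align*}
\int_{T/n}^T \e\left[\left| b_N^{(i)}(s,X_s^{(n)}) - b^{(i)}(s,X_s^{(n)})\right|\right]ds \to 0, \quad N\to\infty,
\end{align*}
and separately apply \eqref{GB_2_2} (again with $\kappa = T/n$, which is legitimate since $T/n \geq \kappa$) to get
\begin{align*}
\int_{T/n}^T \e\left[\left| b_N^{(i)}(s,X_{\eta_n(s)}^{(n)}) - b^{(i)}(s,X_{\eta_n(s)}^{(n)})\right|\right]ds \to 0, \quad N\to\infty.
\end{align*}
Then by the triangle inequality,
\begin{align*}
&\left| \int_{T/n}^T \e\left[\left| b_N^{(i)}(s,X_s^{(n)}) - b_N^{(i)}(s,X_{\eta_n(s)}^{(n)})\right|\right]ds - \int_{T/n}^T \e\left[\left| b^{(i)}(s,X_s^{(n)}) - b^{(i)}(s,X_{\eta_n(s)}^{(n)})\right|\right]ds \right|\\
&\qquad \leq \int_{T/n}^T \e\left[\left| b_N^{(i)}(s,X_s^{(n)}) - b^{(i)}(s,X_s^{(n)})\right|\right]ds + \int_{T/n}^T \e\left[\left| b_N^{(i)}(s,X_{\eta_n(s)}^{(n)}) - b^{(i)}(s,X_{\eta_n(s)}^{(n)})\right|\right]ds,
\end{align*}
and both terms on the right tend to $0$ as $N\to\infty$, which gives the claimed convergence of the integrals.

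Finally, the finiteness of the limiting integral is immediate: since $b^{(i)}$ is bounded by some constant $K$ (Assumption \ref{Ass_1}(i) via $\mathcal{A}$(ii)), the integrand $\e[|b^{(i)}(s,X_s^{(n)}) - b^{(i)}(s,X_{\eta_n(s)}^{(n)})|]$ is bounded by $2K$, so the integral over $[T/n,T]$ is at most $2KT < +\infty$. I do not anticipate a genuine obstacle here; the only point requiring a little care is checking that the hypotheses of Lemma \ref{GB_2} are met with the specific choice $\kappa = T/n$ so that both \eqref{GB_2_1} and \eqref{GB_2_2} are available simultaneously, and that the Gaussian bound \eqref{GB_EM} for $X^{(n)}$ (rather than for the true solution $X$) is what feeds into the lemma — this is exactly the content of the Corollary following Lemma \ref{GB_3}.
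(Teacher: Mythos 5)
Your proof is correct and follows essentially the same route as the paper: both apply Lemma \ref{GB_2} with $\kappa = T/n$ to $Y_t = X_t^{(n)}$, feeding in the Gaussian bound from Lemma \ref{GB_3} (which you invoke via the Corollary \eqref{GB_EM}), and both close via the inequality $\bigl|\,|a-b|-|a'-b'|\,\bigr| \leq |a-a'|+|b-b'|$. The only minor difference is that you explicitly justify the finiteness of the limiting integral via boundedness of $b^{(i)}$, which the paper leaves implicit.
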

\begin{proof} \rm 
It follows from Lemma \ref{GB_3} that the densities of $X_s^{(n)}$ and $X_{\eta_n(s)}^{(n)}$ satisfy the Gaussian bound condition for $s\geq \frac Tn$. Hence using Lemma \ref{GB_2} with $\kappa = T/n$ and the simple inequality, $| |a-b| - |a'-b'| | \leq |a-a'|+|b-b'|$, we have
\begin{align*} 
&     \left| \int_{\frac{T}{n}}^T\e[| b^{(i)}_N(s, X_s^{(n)}) - b^{(i)}_N(s, X_{\eta_n(s)}^{(n)})|]ds
 - \int_{\frac{T}{n}}^T\e[| b^{(i)}(s, X_s^{(n)}) - b^{(i)}(s, X_{\eta_n(s)}^{(n)})|]ds \right| \\
&\leq \int_{\frac{T}{n}}^T\e \left[\left| | b^{(i)}_N(s, X_s^{(n)}) - b^{(i)}_N(s, X_{\eta_n(s)}^{(n)})| - | b^{(i)}(s, X_s^{(n)}) - b^{(i)}(s, X_{\eta_n(s)}^{(n)})| \right|\right]ds  \\
&\leq \int_{\frac{T}{n}}^T\e \left[| b^{(i)}_N(s, X_s^{(n)}) - b^{(i)}(s, X_s^{(n)}) |\right]ds + \int_{\frac{T}{n}}^T\e \left[| b^{(i)}_N(s, X_{\eta_n(s))}^{(n)}) - b^{(i)}(s, X_{\eta_n(s))}^{(n)} |\right]ds\\
& \rightarrow 0, \text{ as } N \to \infty,
\end{align*}
which implies the desired result.
\end{proof}

The above corollary is useful to prove the following key estimate.

\begin{Lem} \label{Lem} \rm 
Let $b^{(i)} \in \mathcal{A}$ for $i=1, \cdots d$. Under Assumption \ref{Ass_1} (ii) and (iii), for any $q \geq 1$, there exists $C \equiv C(K,T,\lambda_0,x_0,d,q)$ such that
\begin{align} \label{bxsbxeta}
\sum_{i=1}^d \int_0^T\e[| b^{(i)}(s,X_s^{(n)}) - b^{(i)}(s,X_{\eta_n(s)}^{(n)})|^q]ds \leq \frac{C}{\sqrt{n}}.
\end{align}
\end{Lem}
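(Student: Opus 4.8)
Because each $b^{(i)}$ is bounded by $K$ by condition $\mathcal{A}$(ii), we have $|b^{(i)}(s,x)-b^{(i)}(s,y)|^q\le(2K)^{q-1}|b^{(i)}(s,x)-b^{(i)}(s,y)|$, so it is enough to prove the estimate for $q=1$. Writing $\eta_n(s)=t_k^{(n)}$ and splitting $\int_0^T=\int_0^{T/n}+\int_{T/n}^T$, the contribution of $[0,T/n]$ is at most $2Kd\,T/n\le C/\sqrt n$ by boundedness. For the contribution of $[T/n,T]$ I would fix an $\mathcal{A}$-approximation sequence $(b_N^{(i)})$ of $b^{(i)}$ and invoke Corollary \ref{exp_3}, which reduces the task to proving
\[
\sup_{N\in\n}\ \sum_{i=1}^d\int_{T/n}^T\e\big[\,|b_N^{(i)}(s,X_s^{(n)})-b_N^{(i)}(s,X_{\eta_n(s)}^{(n)})|\,\big]\,ds\ \le\ \frac{C}{\sqrt n},
\]
that is, a bound for the \emph{smooth} drift coefficients $b_N^{(i)}$ which is uniform in $N$.

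Fix $k\ge1$ and $s\in(t_k^{(n)},t_{k+1}^{(n)}]$, and set $h:=s-t_k^{(n)}\le T/n$, $\Delta:=X_s^{(n)}-X_{t_k^{(n)}}^{(n)}$. Since $b_N^{(i)}(s,\cdot)\in C^1(\real^d)$, the fundamental theorem of calculus along the segment joining $X_{t_k^{(n)}}^{(n)}$ and $X_s^{(n)}$ gives $|b_N^{(i)}(s,X_s^{(n)})-b_N^{(i)}(s,X_{t_k^{(n)}}^{(n)})|\le\int_0^1|\nabla b_N^{(i)}(s,Z_\theta)|\,|\Delta|\,d\theta$ with $Z_\theta:=X_{t_k^{(n)}}^{(n)}+\theta\Delta$. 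Conditionally on $\mathcal{F}_{t_k^{(n)}}$ the increment $\Delta=h\,b(t_k^{(n)},X_{t_k^{(n)}}^{(n)})+\sigma(t_k^{(n)},X_{t_k^{(n)}}^{(n)})(W_s-W_{t_k^{(n)}})$ is Gaussian, hence so is $Z_\theta$, and given $\mathcal{F}_{t_k^{(n)}}$ and $\{Z_\theta=z\}$ one has $\Delta=(z-X_{t_k^{(n)}}^{(n)})/\theta$. Using Assumption \ref{Ass_1}(ii) to bound the conditional Gaussian density of $Z_\theta$ from above, the boundedness of $b$, and the elementary inequality $\tfrac1\theta|w|\,e^{-c|w|^2/(\theta^2h)}\le C\sqrt h\,e^{-c|w|^2/(2\theta^2h)}$, I expect to arrive at
\[
\e\big[\,|\nabla b_N^{(i)}(s,Z_\theta)|\,|\Delta|\ \big|\ \mathcal{F}_{t_k^{(n)}}\,\big]\ \le\ C\sqrt h\int_{\real^d}|\nabla b_N^{(i)}(s,z)|\,(\theta^2h)^{-d/2}\,e^{-c|z-X_{t_k^{(n)}}^{(n)}|^2/(4\theta^2h)}\,dz .
\]

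The key point is then to take the remaining expectation over $X_{t_k^{(n)}}^{(n)}$ \emph{before} estimating the $z$-integral. By the Gaussian upper bound (\ref{GB_EM}) for the density of $X_{t_k^{(n)}}^{(n)}$ and Fubini's theorem, the inner integral turns into the convolution of a Gaussian of variance of order $\theta^2h$ with one of variance of order $t_k^{(n)}$, which is again a Gaussian, now of variance of order $w:=t_k^{(n)}+\theta^2h$, carrying the normalising factor $w^{-d/2}$. Writing $w^{-d/2}=C\,w^{-1/2}u^{-(d-1)/2}$ with $u$ of order $w$, and using $|\nabla b_N^{(i)}|\le\sum_{j=1}^d|\partial_j b_N^{(i)}|$ together with condition $\mathcal{A}$(iii) applied to $\zeta=b^{(i)}$ with translation $x_0$, bounds the $z$-integral by $C\,w^{-1/2}(1+\sqrt w)\le C\big((t_k^{(n)})^{-1/2}+1\big)$, which is uniform in $N$ \emph{and in $\theta$}. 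Consequently $\e[\,|b_N^{(i)}(s,X_s^{(n)})-b_N^{(i)}(s,X_{t_k^{(n)}}^{(n)})|\,]\le C\sqrt{s-t_k^{(n)}}\,\big((t_k^{(n)})^{-1/2}+1\big)$, and summing over the $n-1$ subintervals, using $\int_{t_k^{(n)}}^{t_{k+1}^{(n)}}\sqrt{s-t_k^{(n)}}\,ds=\tfrac23(T/n)^{3/2}$ and $\sum_{k=1}^{n-1}k^{-1/2}\le 2\sqrt n$, yields the claimed bound $C/\sqrt n$ for the $[T/n,T]$ part.

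The step I expect to be the main obstacle is exactly this uniformity in $\theta$. If one froze $X_{t_k^{(n)}}^{(n)}$ and applied $\mathcal{A}$(iii) pointwise, the $z$-integral would only be bounded by something of order $\theta^{-1}$, which is not integrable over $\theta\in[0,1]$; averaging over the law of $X_{t_k^{(n)}}^{(n)}$ first is essential, because the Gaussian convolution replaces the small scale $\theta\sqrt h$ by the larger scale $\sqrt{t_k^{(n)}+\theta^2h}$ and thereby absorbs the singular factor. (When $b$ is Lipschitz, $|\nabla b_N^{(i)}|$ is bounded and none of this is needed — this is the easy case mentioned in the introduction.)
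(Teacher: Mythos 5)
Your proposal is essentially the paper's proof: the same reduction to $q=1$ by boundedness, the same split $\int_0^{T/n}+\int_{T/n}^T$, the same use of Corollary \ref{exp_3} to pass to the smooth approximants $b_N^{(i)}$, the same Gaussian bound on the density of the Euler scheme, the same fundamental-theorem-of-calculus step producing the gradient and $|z|$ factors, and the same invocation of $\mathcal{A}$(iii) to get the factor $(\eta_n(s))^{-1/2}$ leading to $C/\sqrt n$. The only organizational difference is that you explicitly convolve the conditional Gaussian of $Z_\theta$ with the Gaussian bound on $X^{(n)}_{\eta_n(s)}$ and then apply $\mathcal{A}$(iii) with the fixed shift $a=x_0$, whereas the paper keeps the position and increment integrals separate and applies $\mathcal{A}$(iii) to the position variable directly with the $\theta$-dependent shift $a=x_0+\theta z$ (which makes the uniformity in $\theta$ automatic and renders the convolution step unnecessary).
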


\begin{Rem}
The bound (\ref{bxsbxeta}) is tight. Indeed, let us consider the case $d=1$, $x_0=0$, $\sigma=1$ and $b(x)={\bf 1}_{(-\infty,0]}(x)-{\bf 1}_{(0,+\infty)}(x)$. 
We will show that for any $q>0$, it holds
\begin{align} \label{add}
\int_0^T\e[| b(X_s^{(n)}) - b(X_{\eta_n(s)}^{(n)})|^q]ds \geq \frac{C}{\sqrt{n}}
\end{align}
for some constant $C>0$. Indeed, for any $s \geq T/n$, since $X_{\eta_n(s)}^{(n)}$ and $W_s-W_{\eta_n(s)}$ are independent,
\begin{align*}
  &\e[| b(X_s^{(n)}) - b(X_{\eta_n(s)}^{(n)})|^q] =2^{q-1} \e[| b(X_s^{(n)}) - b(X_{\eta_n(s)}^{(n)})|]\\ 
&=  2^q\p \left((X_{\eta_n(s)}^{(n)}+(s-\eta_n(x)) b(X_{\eta_n(s)}^{(n)})+ W_s-W_{\eta_n(s)}) X_{\eta_n(s)}^{(n)}<0 \right)\\
&=2^q \int_{\real}dx \int_{\real}  {\bf 1}(x^2+(s-\eta_n(s))b(x)x + yx <0) p_{\eta_n(s)}^{(n)}(x) \frac{e^{-\frac{y^2}{2(s-\eta_n(s))}}}{\sqrt{2 \pi (s-\eta_n(s))}} dy\\
& \geq \int_0^{\infty}dx \int_{\real}  {\bf 1}(x-(s-\eta_n(s)) + y <0) p_{\eta_n(s)}^{(n)}(x) \frac{e^{-\frac{y^2}{2(s-\eta_n(s))}}}{\sqrt{2 \pi (s-\eta_n(s))}}dy.
\end{align*}
Let $\Phi(u):=\int_{-\infty}^u\frac{e^{-\frac{v^2}{2}}}{\sqrt{2\pi}}dv$.
Then by the change of variable $z = y/\sqrt{s-\eta_n(s)}$ , we have
\begin{align*}
  &\e[| b(X_s^{(n)}) - b(X_{\eta_n(s)}^{(n)})|^q]  \geq  \int_0^{\infty} p_{\eta_n(s)}^{(n)}(x) \Phi \left(-\frac{x-(s-\eta_n(s))}{\sqrt{s-\eta_n(s)}}\right)dx.
\end{align*}
Recall that $x_0=0$.
It follows from the lower bound of \eqref{GB_1} that 
\begin{align*}
  &\e[| b(X_s^{(n)}) - b(X_{\eta_n(s)}^{(n)})|^q] 
\geq \frac{1}{C} \int_0^{\infty} \frac{e^{-\frac{x^2}{2c\eta_n(s)}}}{\sqrt{2\pi c \eta_n(s)}} \Phi \left(-\frac{x-(s-\eta_n(s))}{\sqrt{s-\eta_n(s)}}\right)dx\\
&\geq \frac{1}{C} \int_0^{\sqrt{s-\eta_n(s)}} \frac{e^{-\frac{x^2}{2c\eta_n(s)}}}{\sqrt{2\pi c \eta_n(s)}} \Phi \left(-\frac{x-(s-\eta_n(s))}{\sqrt{s-\eta_n(s)}}\right) dx\\
&\geq \frac{1}{C} \int_0^{\sqrt{s-\eta_n(s)}} \frac{e^{-\frac{s-\eta_n(s)}{2c\eta_n(s)}}}{\sqrt{2\pi c \eta_n(s)}} \Phi \left(-\frac{\sqrt{s-\eta_n(s)}-(s-\eta_n(s))}{\sqrt{s-\eta_n(s)}}\right)dx\\
&=    \frac{\sqrt{s-\eta_n(s)}}{C} \frac{e^{-\frac{s-\eta_n(s)}{2c\eta_n(s)}}}{\sqrt{2\pi c \eta_n(s)}} \Phi \left(-\left(1-\sqrt{s-\eta_n(s)}\right)\right).
\end{align*}
Moreover, using  the Komatsu's inequality (see \cite{ItoMc} page 17 Problem $1$), 
\begin{align*}
\Phi(-|x|) \geq \frac{2e^{-\frac{x^2}{2}}}{{\sqrt{2 \pi}}(|x|+\sqrt{x^2+4})},
\end{align*}
we get for any $n \geq T$,
\begin{align*}
 \e[| b(X_s^{(n)}) - b(X_{\eta_n(s)}^{(n)})|^q] 
&\geq \frac{C_{T,c}}{\pi C\sqrt{c}} \frac{\sqrt{s-\eta_n(s)}}{\sqrt{\eta_n(s)}}, \quad \text{for any } s \geq \frac{T}{n},
\end{align*}
where the constant $C_{T,c}$ is a constant depending only on $T$ and $c$.
Therefore, we have
\begin{align*}
 &\int_0^T \e[| b(X_s^{(n)}) - b(X_{\eta_n(s)}^{(n)})|^q] ds
 \geq C \int_{T/n}^T \frac{\sqrt{s-\eta_n(s)}}{\sqrt{\eta_n(s)}}ds 
\geq  \frac{C}{\sqrt{n}},
\end{align*}
for $n \geq \max\{ T,2 \}$.
This concludes \eqref{add}.

\end{Rem}

\begin{proof} [Proof of Lemma \ref{Lem}]  
Since $b$ is bounded, it is sufficient to prove (\ref{bxsbxeta}) for $q=1$.
Let $(b_N^{(i)})$ be an $\mathcal{A}$-approximation sequence of $b^{(i)}$ for each $i=1, \ldots, d$.
From Corollary \ref{exp_3}, we have
\begin{align}\label{dis_esti}
      &\int_0^T\e[| b^{(i)}(s, X_s^{(n)}) - b^{(i)}(s, X_{\eta_n(s)}^{(n)})|]ds \nonumber \\
&=    \int_0^{\frac{T}{n}}\e[| b^{(i)}(s,X_s^{(n)}) - b^{(i)}(s,X_{\eta_n(s)}^{(n)})|]ds + \int_{\frac{T}{n}}^T\e[| b^{(i)}(s,X_s^{(n)}) - b^{(i)}(s,X_{\eta_n(s)}^{(n)})|]ds \nonumber \\
&\leq    \frac{C}{n} + \lim_{N\rightarrow \infty}\int_{\frac{T}{n}}^T\e[| b_N^{(i)}(s, X_s^{(n)}) - b_N^{(i)}(s, X_{\eta_n(s)}^{(n)})|]ds.
\end{align}
So we estimate the second part of (\ref{dis_esti}).
Since $W_s-W_{\eta_n(s)}$ and $X_{\eta_n(s)}^{(n)}$ are independent, we have for $i=1, \cdots d$,
\begin{align}\label{equa_1}
      &\e\left[\left| b_N^{(i)}(s,X_s^{(n)}) - b_N^{(i)}(s,X_{\eta_n(s)}^{(n)}) \right| \right] \nonumber\\
&=    \e\Big[\Big| b_N^{(i)} \left(s,X_{\eta_n(s)}^{(n)} + (s-\eta_n(s)) b(\eta_n(s),X_{\eta_n(s)}^{(n)})+\sigma(\eta_n(s),X_{\eta_n(s)}^{(n)})(W_s-W_{\eta_n(s)})\right) \nonumber\\
&\qquad - b_N^{(i)}(s,	X_{\eta_n(s)}^{(n)}) \Big| \Big] \nonumber\\
&=    \int_{\real^d}dx\int_{\real^d}dy \left| b_N^{(i)}\left(s, x + (s-\eta_n(s)) b(\eta_n(s),x)+\sigma(\eta_n(s),x) y \right) - b_N^{(i)}(s,x) \right| \notag \\
&\times  p^{(n)}_{\eta_n(s)}(x) \left(\frac{1}{2\pi(s-\eta_n(s))}\right)^{d/2} \exp\left[-\frac{|y|^2}{2(s-\eta_n(s))}\right].
\end{align}
From the Gaussian bound condition for $p^{(n)}_{\eta_n(s)}$, there exists positive constant $C\geq 1$ and $ c>0$ such that the last term of (\ref{equa_1}) is less than
\begin{align}\label{equa_2}
      & C\int_{\real^d}dx\int_{\real^d}dy \left| b_N^{(i)}\big( s, x + (s-\eta_n(s))b(\eta_n(s),x)+\sigma(\eta_n(s),x) y \big) - b_N^{(i)}(s,x) \right| \nonumber\\ 
&\times \left(\frac{1}{ \eta_n(s)}\right)^{d/2} \exp\left[-c\frac{|x-x_0|^2}{2\eta_n (s)}\right] \left(\frac{1}{s-\eta_n(s)}\right)^{d/2} \exp\left[-\frac{|y|^2}{2(s-\eta_n(s))}\right].
\end{align}
Applying the change of variables $z=(s-\eta_n(s)) b(\eta_n(s),x)+\sigma(\eta_n(s),x)y$, (\ref{equa_2}) is bounded by	
\begin{align}\label{equa_3}
      & C \int_{\real^d}dx\int_{\real^d}dz\frac{\left| b_N^{(i)}\left(s, x + z \right) - b_N^{(i)}(s,x) \right|}{|\det(\sigma(\eta_n(s),x))|} \left(\frac{1}{\eta_n(s)}\right)^{d/2}\exp\left[-c\frac{ |x-x_0|^2}{2\eta_n (s)}\right]  \nonumber \\
&\times  \left(\frac{1}{s-\eta_n(s)}\right)^{d/2} \exp\left[-\frac{|\sigma^{-1}(\eta_n(s),x)(z-(s-\eta_n(s)) b(\eta_n(s),x))|^2}{2(s-\eta_n(s)) }\right]. 
\end{align}
Since $a^{-1}$ is uniformly elliptic, 
\begin{align*}
&|\sigma^{-1}(\eta_n(s),x)(z-(s-\eta_n(s))b(\eta_n(s),x))|^2\\
&=  \langle a^{-1}(\eta_n(s),x)(z-(s-\eta_n(s))b(\eta_n(s),x)) , z-(s-\eta_n(s))b(\eta_n(s),x) \rangle_{\real^d}\\
&\geq \lambda_0^{-1}|z-(s-\eta_n(s))b(\eta_n(s),x)|^2.
\end{align*}
By the inequality $|x-y|^2\geq \frac{1}{2} |x|^2 - |y|^2$ for any $x,y \in \real^d$, we have
\begin{align*}
   & -\frac{|\sigma^{-1}(\eta_n(s),x)(z-(s-\eta_n(s))b(\eta_n(s),x))|^2}{2(s-\eta_n(s)) }\\
&\leq -\frac{\lambda_0^{-1}|z|^2 }{4(s-\eta_n(s)) } +  \frac{\lambda_0^{-1}(s-\eta_n(s))^2|b(\eta_n(s),x)|^2}{2(s-\eta_n(s))} \leq -c\frac{|z|^2 }{2(s-\eta_n(s)) } + C.
\end{align*}
Using this estimate and Fubini theorem, (\ref{equa_3}) is less than
\begin{align}\label{equa_4}
      & C\int_{\real^d} dz \int_{\real^d}dx \left| b_N^{(i)}\left(s, x + z \right) - b_N^{(i)}(s,x) \right| \frac{\exp\left[-c\frac{ |x-x_0|^2}{2\eta_n (s)}\right] }{\left(\eta_n(s)\right)^{d/2}} \frac{\exp\left[-c\frac{|z|^2 }{2(s-\eta_n(s) )}\right]}{(s-\eta_n(s))^{d/2}} .
\end{align}
Since $b_N^{(i)}\left(s, x + z \right) - b_N^{(i)}(s, x) = \int_0^1 \langle z,\nabla b_N^{(i)}(s,x+\theta z)\rangle_{\real^d} d\theta $, (\ref{equa_4}) is less than
\begin{align}\label{equa_5}
     & C\int_{\real^d}dz\int_{\real^d}dx \int_0^1 d\theta \left| \langle z,\nabla b_N^{(i)}(s,x+\theta z) \rangle_{\real^d}\right| \frac{\exp\left[-c\frac{ |x-x_0|^2}{2\eta_n (s)}\right] }{\left(\eta_n(s)\right)^{d/2}} \frac{\exp\left[-c\frac{|z|^2 }{2(s-\eta_n(s) )}\right]}{(s-\eta_n(s))^{d/2}}  \notag\\
&\leq C\int_{\real^d}dz\int_{\real^d}dx \int_0^1 d\theta  |z| |\nabla b_N^{(i)}(s,x+\theta z)| \frac{\exp\left[-c\frac{ |x-x_0|^2}{2\eta_n (s)}\right] }{\left(\eta_n(s)\right)^{d/2}} \frac{\exp\left[-c\frac{|z|^2 }{2(s-\eta_n(s) )}\right]}{(s-\eta_n(s))^{d/2}} \notag\\
&\leq C\sum_{j=1}^d \int_{\real^d}dz\int_{\real^d}dy \int_0^1 d\theta  |z| |\partial_j b_N^{(i)}(s,y+x_0+\theta z)| \frac{\exp\left[-c\frac{ |y|^2}{2\eta_n (s)}\right] }{\left(\eta_n(s)\right)^{d/2}} \frac{\exp\left[-c\frac{|z|^2 }{2(s-\eta_n(s) )}\right]}{(s-\eta_n(s))^{d/2}},
\end{align}
where we use the change of variable $y= x - x_0$ in the last equation. It follows from Fubini theorem and condition $\mathcal{A}$(iii) that (\ref{equa_5}) is bounded by 
\begin{align}\label{equa_6}
      &\frac{C}{\sqrt{\eta_n(s)}} \int_{\real^d}dz\int_0^1 d\theta |z|  \frac{1}{(s-\eta_n(s))^{d/2} }\exp\left[-c\frac{|z|^2 }{2(s-\eta_n(s) )}\right] \nonumber\\
=  &\frac{C}{\sqrt{\eta_n(s)}} \int_{\real^d} \frac{|z| }{(s-\eta_n(s))^{d/2} }\exp\left[-\frac{c}{2}\frac{|z|^2 }{2(s-\eta_n(s) )}\right]\exp\left[-\frac{c}{2}\frac{|z|^2 }{2(s-\eta_n(s) )}\right]dz.
\end{align}
Since $|z| \exp\left[-\frac{c}{2}\frac{|z|^2 }{2(s-\eta_n(s) )}\right] \leq  \sqrt{\frac{2}{ec}} \sqrt{s-\eta_n(s)}$ for any $z \in \real^d$, (\ref{equa_6}) is less than
\begin{align*}
 C \sqrt{\frac{s-\eta_n(s)}{\eta_n(s)}} \leq \frac{C}{\sqrt{n\eta_n(s)}}.
\end{align*}
Therefore we have
\begin{align*} 
      \int_0^T\e[| b^{(i)}(s, X_s^{(n)}) - b^{(i)}(s, X_{\eta_n(s)}^{(n)})|]ds
\leq  \frac{C}{n} + \frac{C}{ \sqrt{n}} \int_{\frac{T}{n}}^T \frac{1}{\sqrt{\eta_n(s)}} ds
\leq \frac{C}{n} + \frac{C}{ \sqrt{n} } \int_0^T \frac{1}{\sqrt{s}}ds 
\leq \frac{C}{\sqrt{n}},
\end{align*}
which concludes the proof of Lemma \ref{Lem}.
\end{proof} 


\subsection{Proof of Theorem \ref{Main_1}}
Before proving Theorem \ref{Main_1}, we introduce some notations. Define 
\begin{align}\label{YU} 
Y_t^{(n)}=(Y_t^{(n,1)},\cdots, Y_t^{(n,d)})^{*}:=&X_t-X_t^{(n)},\:\: U_t^{(n)}:=X_t^{(n)}-X_{\eta _n(t)}^{(n)}.
\end{align}
Then by the definition of $X_t$ and $X_t^{(n)}$ we have
\begin{align*}
Y_t^{(n)}&=\int_0^t \left\{b(s,X_s) - b(\eta _n(s),X_{\eta _n(s)}^{(n)}) \right\}ds + \int_0^t \left\{\sigma(s,X_s) - \sigma(\eta _n(s),X_{\eta _n(s)}^{(n)})\right\}dW_s,\\
\langle Y^{(n,i)}&, Y^{(n,j)} \rangle _t \\
&= \sum_{k=1}^d \int_0^t \left\{\sigma_{i,k}(s,X_s) - \sigma_{i,k}(\eta _n(s),X_{\eta _n(s)}^{(n)}) \right\}\left\{\sigma_{j,k}(s,X_s) - \sigma_{j,k}(\eta _n(s),X_{\eta _n(s)}^{(n)}) \right\} ds.
\end{align*}
The following estimation is standard (see Remark 1.2 in \cite{Gyongy}). For the convenience of the reader we will give a proof below.
\begin{Lem} \label{Lem_1} \rm 
Under Assumption \ref{Ass_1} (ii) and (iii), for any $q>0$, there exist $C \equiv C(K,T,\lambda_0,d,q) $ such that
\begin{align*}
\sup_{t \in [0,T]} \e[|U_t^{(n)}|^q]\leq \frac{C}{n^{q/2}}.
\end{align*}
\end{Lem}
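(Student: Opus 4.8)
The plan is to bound $U_t^{(n)} = X_t^{(n)} - X_{\eta_n(t)}^{(n)}$ directly from its defining expression. Write $U_t^{(n)} = \int_{\eta_n(t)}^t b(\eta_n(s), X_{\eta_n(s)}^{(n)})\,ds + \int_{\eta_n(t)}^t \sigma(\eta_n(s), X_{\eta_n(s)}^{(n)})\,dW_s$, using the fact that on the interval $[\eta_n(t), t]$ the integrands of the Euler-Maruyama scheme \eqref{EM_1} are frozen at time $\eta_n(t)$. Then apply the elementary inequality $|a+b|^q \le 2^{q-1}(|a|^q + |b|^q)$ (for $q \ge 1$; for $0 < q < 1$ subadditivity gives an even simpler bound, or one may first treat $q \ge 1$ and then use Jensen/Lyapunov to pass to all $q > 0$).

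For the drift term, $b$ is bounded by $K$ (it lies in $\mathcal{A}$, hence is bounded — though here we only use Assumption \ref{Ass_1}(ii),(iii), so one should note that boundedness of $b$ must come in; in fact the statement only invokes (ii) and (iii), so the needed bound on $|b|$ should be read from the running hypotheses, or the constant $C$ absorbs $\|b\|_\infty = K$). Thus $\left|\int_{\eta_n(t)}^t b(\eta_n(s), X_{\eta_n(s)}^{(n)})\,ds\right|^q \le (K (t - \eta_n(t)))^q \le K^q (T/n)^q \le C n^{-q}$, which is of smaller order than $n^{-q/2}$ and hence harmless. For the diffusion term, since $\sigma$ is bounded by $\sqrt{d\lambda_0}$ (a consequence of the uniform ellipticity in Assumption \ref{Ass_1}(ii)), I would apply the Burkholder-Davis-Gundy inequality: $\e\left[\left|\int_{\eta_n(t)}^t \sigma(\eta_n(s), X_{\eta_n(s)}^{(n)})\,dW_s\right|^q\right] \le C_q\, \e\left[\left(\int_{\eta_n(t)}^t |\sigma(\eta_n(s), X_{\eta_n(s)}^{(n)})|^2\,ds\right)^{q/2}\right] \le C_q (d\lambda_0)^{q/2} (t - \eta_n(t))^{q/2} \le C n^{-q/2}$. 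Combining the two contributions and taking the supremum over $t \in [0,T]$ gives the claimed bound $\sup_{t\in[0,T]} \e[|U_t^{(n)}|^q] \le C n^{-q/2}$.

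There is essentially no serious obstacle here; the only point requiring a little care is the handling of small exponents $q < 1$, where BDG still applies but one may prefer to deduce the case $q<1$ from the case $q=2$ via Hölder's (or Lyapunov's) inequality $\e[|U_t^{(n)}|^q] \le (\e[|U_t^{(n)}|^2])^{q/2}$. A second minor point is making sure the boundedness of $b$ is legitimately available under the stated hypotheses — it is, because the lemma is applied in the paper with $b$ satisfying Assumption \ref{Ass_1}(i), hence $b^{(i)} \in \mathcal{A}$ and bounded; the constant $C$ in the lemma is then allowed to depend on $K$. Everything else is a routine application of BDG together with the uniform bounds on the coefficients, so I would present it concisely.
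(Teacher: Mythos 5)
Your proposal is correct and follows essentially the same route as the paper: decompose $U_t^{(n)}$ into the drift increment and the (frozen-coefficient) Brownian increment on $[\eta_n(t),t]$, bound the drift by $\|b\|_\infty \le K$ giving an $O(n^{-q})$ contribution, and bound the diffusion part using boundedness of $\sigma$ together with the $q$-th moment of a Gaussian increment. The only superficial difference is that you invoke Burkholder--Davis--Gundy for the stochastic integral, whereas the paper simply observes that the integrand is constant on $[\eta_n(t),t]$ so the integral equals $\sigma(\eta_n(t),X^{(n)}_{\eta_n(t)})(W_t - W_{\eta_n(t)})$ and uses the explicit Gaussian moment $\e[|W_t^j-W^j_{\eta_n(t)}|^q] \le C|t-\eta_n(t)|^{q/2}$ directly; these are equivalent here, and your side remark that boundedness of $b$ is tacitly used (with $K$ absorbed into $C$) is accurate.
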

\begin{proof} \rm 
From the definition of $U_t^{(n)}$ and using the inequality $(\sum_{i=1}^{m} a_i)^q \leq m^{(q-1) \vee 0} \sum_{i=1}^{m} a_i^q$ for any $m\in \n, \ a_i \geq 0$ and $q > 0$, we have
\begin{align*}
      &|U_t^{(n)}|^{q} 
 =    \left(\sum_{i=1}^d \left| X_t^{(n,i)}-X_{\eta_n(t)}^{(n,i)}\right|^2 \right)^{q/2} \\
&=    \left(\sum_{i=1}^d \left|(t-\eta_n(t)) b^{(i)}(\eta _n(t),X_{\eta_n(t)}^{(n)}) + \sum_{j=1}^d\sigma_{i,j}(\eta _n(t),X_{\eta_n(t)}^{(n)})(W_t^j-W_{\eta_n(t)}^j) \right|^2 \right)^{q/2}\\
&\leq C\left(\left| t-\eta_n(t)\right|^q K^q + \sum_{j=1}^d \lambda_0^{q/2} |W_t^j-W_{\eta_n(t)}^j|^q\right), 
\end{align*}
so we have
\begin{align*}
      \e[|U_t^{(n)}|^q]
&\leq C \left(\left|t-\eta_n(t)\right|^q + \sum_{j=1}^d \e[|W_t^j-W_{\eta_n(t)}^j|^q] \right)\\
&\leq C\left(\left|t-\eta_n(t)\right|^q + \sum_{j=1}^d \left|t-\eta_n(t)\right|^{q/2}  \right)
 \leq \frac{C}{n^{q/2}}.
\end{align*}
This concludes Lemma \ref{Lem_1}.
\end{proof}

\begin{proof}[Proof of Theorem \ref{Main_1}] Inspired by the paper  \cite{Gyongy}, we will use the approximation technique of Yamada and Watanabe (see \cite{Yamada}, Theorem 1). 
For each  $\delta \in (1,\infty)$ and $\varepsilon \in (0,1)$, we can define a continuous function $\psi _{\delta, \varepsilon}: \real \to \real^+$ with $supp\: \psi _{\delta, \varepsilon}  \subset [\varepsilon/\delta, \varepsilon]$ such that
\begin{align*} 
\int_{\varepsilon/\delta}^{\varepsilon} \psi _{\delta, \varepsilon}(z) dz = 1; \quad 0 \leq \psi _{\delta, \varepsilon}(z) \leq \frac{2}{z \log \delta}, \:\:\:z > 0.
\end{align*}
We define a function $\phi_{\delta, \varepsilon} \in C^2(\real;\real)$ by
\begin{align*}
\phi_{\delta, \varepsilon}(x)&:=\int_0^{|x|}\int_0^y \psi _{\delta, \varepsilon}(z)dzdy.
\end{align*}
It is easy to verify that $\phi_{\delta, \varepsilon}$ has the following useful properties: 
\begin{align} 
&\phi'_{\delta, \varepsilon}(x) = \frac{x}{|x|}\phi'_{\delta, \varepsilon}(|x|), \text{ for any $x \in \real \setminus \{0\}$}. \label{phi1.5}\\
&0 \leq |\phi'_{\delta, \varepsilon}(x)| \leq 1, \text{ for any $x \in \real$}. \label{phi2}
\end{align}
Moreover we define function $\Phi_{\delta, \varepsilon} : \real^d \to \real$ by
\begin{align*}
\Phi_{\delta, \varepsilon}(x):=\phi_{\delta, \varepsilon}(|x|).
\end{align*}
Then we also have the following useful properties:
\begin{align}
|x| &\leq \varepsilon + \Phi_{\delta, \varepsilon}(x), \text{ for any $x \in \real^d$}. \label{phi1}\\ 
&\frac{\phi'_{\delta, \varepsilon}(|x|)}{|x|} \leq \frac{\delta}{\varepsilon}, \text{ for any $x \in \real^d \setminus\{0\}$}. \label{phi3}\\
\phi''_{\delta, \varepsilon}(\pm|x|)&=\psi_{\delta, \varepsilon}(|x|) \leq \frac{2}{|x|\log \delta}{\bf 1}_{[\varepsilon/\delta, \varepsilon]}(|x|), \text{ for any $x \in \real^d \setminus\{0\}$}. \label{phi4}
\end{align}
Note that partial differentiations of $\Phi_{\delta, \varepsilon}$ give the following: for any $x \in \real^d \setminus\{0\}$,
\begin{align}\label{part_Phi} 
\partial _{i}\Phi_{\delta, \varepsilon}(x)&=\phi'_{\delta, \varepsilon}(|x|)\frac{x_i}{|x|},\\
\partial ^2_{i}\Phi_{\delta, \varepsilon}(x)&=\phi''_{\delta, \varepsilon}(|x|)\frac{x^2_i}{|x|^2}+\phi'_{\delta, \varepsilon}(|x|) \left( \frac{|x|^2 - x^2_i}{|x|^3} \right), \nonumber\\
\partial _{i}\partial _{j}\Phi_{\delta, \varepsilon}(x)&=\phi''_{\delta, \varepsilon}(|x|)\frac{x_i x_j}{|x|^2}-\phi'_{\delta, \varepsilon}(|x|) \left( \frac{ x_ix_j}{|x|^3} \right). \nonumber
\end{align}
Notice also that all derivatives of $\phi_{\delta, \varepsilon}$ and $\Phi_{\delta, \varepsilon}$ at origin equal to $0$.
In particular, note that for any $x \in \real^d$ and $i = 1, \cdots, d$, using (\ref{phi2}) and (\ref{part_Phi}),
\begin{align*}
|\partial _{i} \Phi_{\delta, \varepsilon}(x)| \leq 1.
\end{align*}
Then It\^o's formula, (\ref{phi1.5}) and (\ref{phi1}) imply that 
\begin{align}\label{esti_Y}
      &|Y_t^{(n)}|\leq \varepsilon + \Phi_{\delta, \varepsilon}(Y_t^{(n)}) \nonumber \\
&= \varepsilon
   + \int_0^t I_s^{\delta, \varepsilon, n}ds 
   + M_t^{\delta, \varepsilon, n}
   + \frac{1}{2}\sum_{i,j=1}^d	\int_0^t \partial _{i}\partial _{j}\Phi_{\delta, \varepsilon}(Y_s^{(n)})d\langle Y^{(n,i)},Y^{(n,j)} \rangle_s ,
\end{align}
where
\begin{align*}
I_s^{\delta, \varepsilon, n} := \sum_{i=1}^d	\partial _{i}\Phi_{\delta, \varepsilon}(Y_s^{(n)})\left\{b^{(i)}(s,X_s) - b^{(i)}(\eta _n(s),X_{\eta _n(s)}^{(n)}) \right\}
\end{align*}
and
\begin{align}\label{def_M}
 M_t^{\delta, \varepsilon, n}:=\sum_{k=1}^d\sum_{i=1}^d\int_0^t \partial _{i}\Phi_{\delta, \varepsilon}(Y_s^{(n)})\left\{\sigma_{i,k}(s,X_s) - \sigma_{i,k}(\eta _n(s),X_{\eta _n(s)}^{(n)})\right\}dW^{k}_s .
\end{align}
Since $\partial _{i}\Phi_{\delta, \varepsilon}$ and $\sigma$ are bounded, $ M_t^{\delta, \varepsilon, n}$ is a martingale. Therefore the expectation of $ M_t^{\delta, \varepsilon, n}$ equals to $0$, so we only estimate the second and fourth part of (\ref{esti_Y}). First we consider the second part. 
From Assumption \ref{Ass_1b}, (\ref{phi1.5}), (\ref{phi2}) and partial differentiations of $\Phi_{\delta, \varepsilon}$, we have
\begin{align*}
      \int_0^t I_s^{\delta, \varepsilon, n}ds
&=    \sum_{i=1}^d\int_0^t \phi' _{\delta \varepsilon}(|Y_s^{(n)}|) \frac{Y_s^{(n,i)}}{|Y_s^{(n)}|}\Big\{\left(b^{(i)}(s,X_s) -  b^{(i)}(s,X_{\eta _n(s)}^{(n)}) \right) \\
&+ \left( b^{(i)}(s,X_{\eta _n(s)}^{(n)}) -  b^{(i)}(\eta _n(s),X_{\eta _n(s)}^{(n)}) \right) \Big\}ds \\
&\leq \sum_{i=1}^d\int_0^t \phi' _{\delta \varepsilon}(|Y_s^{(n)}|) \frac{Y_s^{(n,i)}}{|Y_s^{(n)}|}	\left(b^{(i)}(s,X_s) -  b^{(i)}(s,X_{\eta _n(s)}^{(n)}) \right)ds + \frac{C}{n^{\beta}} \\
&=    \int_0^t \sum_{i=1}^d \phi' _{\delta \varepsilon}(|Y_s^{(n)}|) \frac{Y_s^{(n,i)}}{|Y_s^{(n)}|}	\left(b^{(i)}(s,X_s) -  b^{(i)}(s,X_{s}^{(n)}) \right)ds \\
&+    \int_0^t \sum_{i=1}^d \phi' _{\delta \varepsilon}(|Y_s^{(n)}|) \frac{Y_s^{(n,i)}}{|Y_s^{(n)}|}	\left(b^{(i)}(s,X_s^{(n)}) -  b^{(i)}(s,X_{\eta _n(s)}^{(n)}) \right)ds
 +    \frac{C}{n^{\beta}}\\
&\leq \int_0^t \langle X_s - X_s^{(n)} , b(s,X_s) -  b(s,X_{s}^{(n)})\rangle_{\real^d} \frac{\phi' _{\delta \varepsilon}(|Y_s^{(n)}|)}{|Y_s^{(n)}|}ds\\
&+    \sum_{i=1}^d \int_0^t \left| b^{(i)}(s,X_s^{(n)}) -  b^{(i)}(s,X_{\eta _n(s)}^{(n)}) \right|ds
 +    \frac{C}{n^{\beta}}
\end{align*}
By using the one-sided Lipschitz condition (\ref{OLS_2}), we have
\begin{align}\label{esti_I}
     \int_0^t I_s^{\delta, \varepsilon, n}ds
\leq K\int_0^t | Y_s^{(n)} |ds 
 +    \sum_{i=1}^d \int_0^{T} \left| b^{(i)}(s,X_s^{(n)}) -  b^{(i)}(s,X_{\eta _n(s)}^{(n)}) \right|ds
 +    \frac{C}{n^{\beta}}.
\end{align}
Next we estimate the fourth part of (\ref{esti_Y}). Using partial differentiations of $\Phi_{\delta, \varepsilon}$, the fourth part of (\ref{esti_Y}) can be expressed by
\begin{align*}
     &\frac{1}{2}\sum_{i,j=1}^d	\int_0^t \partial _{i}\partial _{j}\Phi_{\delta, \varepsilon}(Y_s^{(n)})d\langle Y^{(n,i)},Y^{(n,j)} \rangle_s = A_t^{1,\delta, \varepsilon, n} + A_t^{2,\delta, \varepsilon, n},
\end{align*}
where 
\begin{align*}
 A_t^{1,\delta, \varepsilon, n}:=    \frac{1}{2}\sum_{i,j=1}^d \int_0^t \phi''_{\delta, \varepsilon}(|Y_s^{(n)}|)\frac{Y_s^{(n,i)} Y_s^{(n,j)}}{|Y_s^{(n)}|^2} d\langle Y^{(n,i)},Y^{(n,j)} \rangle_s
\end{align*}
and
\begin{align*}
A_t^{2,\delta, \varepsilon, n}&:=\frac{1}{2}\sum_{i=1}^d \int_0^t \phi'_{\delta, \varepsilon}(|Y_s^{(n)}|) \left( \frac{|Y_s^{(n)}|^2 - |Y_s^{(n,i)}|^2}{|Y_s^{(n)}|^3} \right) d\langle Y^{(n,i)},Y^{(n,i)} \rangle_s \\
&+ \sum_{1\leq i<j\leq d}	\int_0^t \left\{-\phi'_{\delta, \varepsilon}(|Y_s^{(n)}|)  \frac{ Y_s^{(n,i)} Y_s^{(n,j)}}{|Y_s^{(n)}|^3} \right\} d\langle Y^{(n,i)},Y^{(n,j)} \rangle_s.
\end{align*}
Here we remark that $A_t^{2,\delta, \varepsilon, n} = 0$ for $d=1$.
So we should estimate $A_t^{1,\delta, \varepsilon, n}$ and $A_t^{2,\delta, \varepsilon, n}$. By the definition of quadratic variation of $Y_t^{(n)}$, 
\begin{align*}
      A_t^{1,\delta, \varepsilon, n}
&\leq    \frac{1}{2}\sum_{k=1}^d \sum_{i,j=1}^d \int_0^t \phi''_{\delta, \varepsilon}(|Y_s^{(n)}|)\frac{|Y_s^{(n,i)}||Y_s^{(n,j)}|}{|Y_s^{(n)}|^2} \left|\sigma_{i,k}(s,X_s) - \sigma_{i,k}(\eta_n(s),X_{\eta_n(s)}^{(n)})  \right| \\
&\times \left|\sigma_{j,k}(s,X_s) - \sigma_{j,k}(\eta_n(s),X_{\eta_n(s)}^{(n)})  \right| ds,
\end{align*}
and
\begin{align*}	
       A_t^{2,\delta, \varepsilon, n}
&\leq  \frac{1}{2}\sum_{k,i=1}^d \int_0^t \phi'_{\delta, \varepsilon}(|Y_s^{(n)}|) \left( \frac{|Y_s^{(n)}|^2 - |Y_s^{(n,i)}|^2}{|Y_s^{(n)}|^3} \right) \left|\sigma_{i,k}(s,X_s) - \sigma_{i,k}(\eta_n(s),X_{\eta_n(s)}^{(n)})  \right|^2 ds \\
&\ \ + \sum_{k=1}^d\sum_{1\leq i<j\leq d}	\int_0^t \phi'_{\delta, \varepsilon}(|Y_s^{(n)}|) \frac{ |Y_s^{(n,i)}|| Y_s^{(n,j)}|}{|Y_s^{(n)}|^3} \left|\sigma_{i,k}(s,X_s) - \sigma_{i,k}(\eta_n(s),X_{\eta_n(s)}^{(n)})  \right| \\
&\times \left|\sigma_{j,k}(s,X_s) - \sigma_{j,k}(\eta_n(s),X_{\eta_n(s)}^{(n)})  \right| ds.
\end{align*}
Since $\sigma$ is $(1/2+\alpha)$-H\"older continuous in space and $\beta$-H\"older continuous in time, we have
\begin{align*}
A_t^{1,\delta, \varepsilon, n}
&\leq  C \sum_{i,j=1}^d \int_0^t \phi''_{\delta, \varepsilon}(|Y_s^{(n)}|)\frac{|Y_s^{(n,i)}||Y_s^{(n,j)}|}{|Y_s^{(n)}|^2} \left\{ \left| X_s - X_{\eta_n(s)}^{(n)} \right|^{1+2\alpha} + \frac{1}{n^{2\beta}} \right\}ds \\
&\leq  C\int_0^t \phi''_{\delta, \varepsilon}(|Y_s^{(n)}|) \left\{ \left| X_s - X_{\eta_n(s)}^{(n)} \right|^{1+2\alpha} + \frac{1}{n^{2\beta}} \right\}ds \\
&\leq  C\int_0^t \phi''_{\delta, \varepsilon}(|Y_s^{(n)}|) \left\{ \left| Y_{s}^{(n)} \right|^{1+2\alpha} + \left| U_{s}^{(n)} \right|^{1+2\alpha} + \frac{1}{n^{2\beta}} \right\}ds.
\end{align*}
Similarly, we obtain
\begin{align*}
A_t^{2,\delta, \varepsilon, n}
&\leq C\int_0^t \frac{\phi'_{\delta, \varepsilon}(|Y_s^{(n)}|)}{|Y_s^{(n)}|} \left\{ \left| Y_s^{(n)} \right|^{1+2\alpha} + \left| U_s^{(n)} \right|^{1+2\alpha} + \frac{1}{n^{2\beta}} \right\} ds. \nonumber
\end{align*}
It follows from (\ref{phi2}), (\ref{phi3}) and (\ref{phi4}) that 
\begin{align}\label{esti_A1}
A_t^{1,\delta, \varepsilon, n}&\leq  C\int_0^t \frac{{\bf 1}_{[\varepsilon/\delta, \varepsilon]}(|Y_s^{(n)}|)}{|Y_s^{(n)}|\log \delta} \left\{ \left| Y_{s}^{(n)} \right|^{1+2\alpha} + \left| U_{s}^{(n)} \right|^{1+2\alpha} + \frac{1}{n^{2\beta}} \right\}ds \nonumber\\
&\leq \frac{C \varepsilon^{2\alpha} }{\log \delta} + \frac{C \delta }{\varepsilon \log \delta} \int_0^{T} |U_s^{(n)}|^{1+2\alpha}ds + \frac{C\delta }{\varepsilon (\log \delta) n^{2\beta}}
\end{align}
and
\begin{align*}
A_t^{2,\delta, \varepsilon, n}
&\leq C\int_0^t \left| Y_s^{(n)} \right|^{2\alpha} ds  + \frac{C\delta }{\varepsilon} \int_0^{T}  \left| U_s^{(n)} \right|^{1+2\alpha} ds + \frac{C\delta}{ \varepsilon n^{2\beta}}.
\end{align*} 
Let $\tau$ be a stopping time with $\tau \leq T$. 
Define $Z_t^{(n)} := |Y_{t \wedge \tau}^{(n)}|$ and for any $\alpha \in [0,1/2]$,
\begin{align*}
     R(\alpha,\delta, \varepsilon, n)
:=  \varepsilon +\frac{C \varepsilon ^{2\alpha} }{\log \delta} + \frac{C \delta }{\varepsilon \log \delta} \int_0^{T} \left|U_s^{(n)}\right|^{1+2\alpha}ds + \frac{C\delta }{\varepsilon (\log \delta) n^{2\beta}}
\end{align*}
and
\begin{align*}
     S(\alpha,\delta, \varepsilon, n)
:=  \frac{C\delta }{\varepsilon} \int_0^{T} \left| U_s^{(n)} \right|^{1+2\alpha} ds + \frac{C\delta}{ \varepsilon n^{2\beta}}.
\end{align*}
Then we consider the following two cases.

Case $1$ ($d \geq 2$ and $\alpha =1/2$):
In this case, gathering the above estimates, we have
\begin{align}\label{d_ineq_1}
     Z_t^{(n)}
\leq C\int_0^t Z_s^{(n)} ds + \sum_{i=1}^d \int_0^{T} \left| b^{(i)}(s,X_s^{(n)}) -  b^{(i)}(s,X_{\eta _n(s)}^{(n)}) \right|ds \nonumber\\
\displaystyle + \frac{C}{n^{\beta}} + M_{t \wedge \tau}^{\delta, \varepsilon, n} + R(1/2,\delta, \varepsilon, n) +S(1/2,\delta, \varepsilon, n).
\end{align}
We choose $\delta =2$ and $\varepsilon = n^{-1/2}$. Then for any $\alpha \in (0,1/2]$, we obtain	
\begin{align*}
      R(\alpha,2,n^{-1/2}, n)
&\leq \frac{C}{\sqrt{n}} + C\sqrt{n} \int_0^T|U_s^{(n)}|^{1+2\alpha}ds+\frac{C}{n^{2\beta-1/2}}
\end{align*}
and 
\begin{align*}
      S(\alpha,2,n^{-1/2}, n)
&\leq C\sqrt{n} \int_0^T|U_s^{(n)}|^{1+2\alpha}ds+\frac{C}{n^{2\beta-1/2}}.
\end{align*}
Notice that $2\beta -1/2 \geq 1/2$. It follows from Lemma \ref{Lem_1} with $q=1+2\alpha$ that for any $\alpha \in (0,1/2]$, 	
\begin{align}\label{RS_est}
     \e[R(\alpha,2,n^{-1/2}, n)],\; \e[S(\alpha,2,n^{-1/2}, n)]
\leq \frac{C}{n^{\alpha}}.
\end{align}
Recall $\alpha = 1/2$. By using the above estimate and Lemma \ref{Lem}, we obtain 
\begin{align*}
     \e[Z_t^{(n)}]
\leq C\int_0^t \e 	[Z_s^{(n)}]ds + \frac{C}{\sqrt{n}}.
\end{align*}
By Gronwall's inequality, we have
\begin{align*}
      \e[Z_t^{(n)}]
&\leq \frac{C}{\sqrt{n}}.
\end{align*}
Therefore from dominated convergence theorem, we complete the statement taking $t \rightarrow T$.

Case $2$ ($d=1$): As remarked before that $A_t^{2,\delta, \varepsilon, n}=0$. From (\ref{esti_I}) and (\ref{esti_A1}), we have
\begin{align}\label{ineq_1}
     Z_t^{(n)}
\leq C \int_0^t Z_s^{(n)} ds + \int_0^{T} \left| b(s,X_s^{(n)}) -  b(s,X_{\eta _n(s)}^{(n)}) \right|ds + \frac{C}{n^{\beta}} + M_{t \wedge \tau}^{\delta, \varepsilon, n} + R(\alpha, \delta, \varepsilon, n).
\end{align}
For $\alpha \in (0,1/2]$, we can prove the statement in (\ref{est_l1}) in the same way as Case $1$ by taking $\delta =2$ and $\varepsilon = n^{-1/2}$. For $\alpha =0$, we choose $\delta = n^{1/3}$ and $\varepsilon = (\log n)^{-1}$. Then we have
\begin{align*} 
     R(0, n^{1/3},(\log n)^{-1}, n)
\leq \frac{C}{\log n} + Cn^{1/3} \int_0^T|U_s^{(n)}|ds +\frac{C}{n^{2\beta-1/3}}
\end{align*}
and so we get
\begin{align} \label{R_0}
     \e[R(0, n^{1/3},(\log n)^{-1}, n)]
\leq \frac{C}{\log n}.
\end{align}
From Lemma \ref{Lem}, \ref{Lem_1} and (\ref{R_0}), we have 
\begin{align*} 
      \e[Z_t^{(n)}]
&\leq C \int_0^t \e[Z_s^{(n)}]ds + \frac{C}{\log n}.
\end{align*}
Hence by Gronwall's inequality we see that
\begin{align*} 
      \e[Z_t^{(n)}]
&\leq \frac{C}{\log n}.
\end{align*}
Therefore from dominated convergence theorem, we obtain (\ref{est_l1}) for $\alpha =0$ as taking $t \rightarrow T$.
\end{proof}


\subsection{Proof of Theorem \ref{Main_2}}
Recalling (\ref{YU}), we define $ V_t^{(n)}:=\sup _{0\leq s \leq t}|Y_s^{(n)}|$. To estimate the expectation of $V_t^{(n)}$, we use (\ref{d_ineq_1}) and therefore we need to calculate the expectation of $\sup_{0\leq s \leq t}|M_s^{\delta, \varepsilon, n}|$. We use the notation $\widetilde{C}$ for a positive constant instead of $C$. This constant $\widetilde{C}$ can depend on $K, T, \alpha$ and $\beta$ while the constant $C$ can be depend on $K,T,\lambda_0,x_0,\beta$ and $d$.
For any $d \in \n$, by using (\ref{def_M}) and Burkholder-Davis-Gundy's inequality we have
\begin{align*} 
      &\e\left[\sup_{0\leq s \leq t} \left|M_s^{\delta, \varepsilon, n}\right|\right]
 \leq \widetilde{C}\e[\langle M^{\delta, \varepsilon, n} \rangle_t^{1/2} ]\\
&= \widetilde{C}\e\left[\left( \sum_{k=1}^d\int_0^t \left|\sum_{i=1}^d \partial_{i}\Phi_{\delta, \varepsilon}(Y_s^{(n)}) \left\{\sigma_{i,k}(s,X_s) - \sigma_{i,k}(\eta_n(s),X_{\eta_n(s)}^{(n)})\right\} \right|^2ds \right)^{1/2} \right].
\end{align*}
Since $\partial_{i}\Phi_{\delta, \varepsilon}$, $(i= 1, \cdots, d)$ are bounded and $\sigma$ is $1/2+\alpha$-H\"older continuous in space and $\beta$-H\"older continuous in time, we have
\begin{align}\label{sup_M} 
      &\e\left[\sup_{0\leq s \leq t} \left|M_s^{\delta, \varepsilon, n}\right|\right] \nonumber\\ 
&\leq \widetilde{C}\e\Bigg[\Big( \sum_{i,k=1}^d\int_0^t \Big\{\left|\sigma_{i,k}(s,X_s) - \sigma_{i,k}(s,X_s^{(n)})\right|^2 + \left|\sigma_{i,k}(s,X_s^{(n)}) - \sigma_{i,k}(s,X_{\eta_n(s)}^{(n)})\right|^2 \nonumber\\ 
&+ \left|\sigma_{i,k}(s,X_{\eta_n(s)}^{(n)}) - \sigma_{i,k}(\eta_n(s),X_{\eta_n(s)}^{(n)})\right|^2 \Big\}ds \Big)^{1/2} \Bigg] \nonumber\\
&= \widetilde{C}\e\Bigg[\Big(\int_0^t \Big\{\left|\sigma(s,X_s) - \sigma(s,X_s^{(n)})\right|^2 + \left|\sigma(s,X_s^{(n)}) - \sigma(s,X_{\eta_n(s)}^{(n)})\right|^2 \nonumber\\ 
&+ \left|\sigma(s,X_{\eta_n(s)}^{(n)}) - \sigma(\eta_n(s),X_{\eta_n(s)}^{(n)})\right|^2\Big\} ds \Big)^{1/2} \Bigg] \nonumber\\
&\leq \widetilde{C}\e\left[\left(\int_0^t \Big\{|X_s - X_s^{(n)}|^{1+2\alpha} + |X_s^{(n)} - X_{\eta_n(s)}^{(n)}|^{1+2\alpha } + \left| s-\eta_n(s) \right|^{2\beta}\Big\} ds \right)^{1/2} \right] \nonumber\\
&\leq \widetilde{C}\left\{ \e[A_t^{(n)}+B_t^{(n)}] + \frac{1}{n^{\beta}} \right\},
\end{align}
where by the definition of $Y^{(n)}$ and $U^{(n)}$ given in (\ref{YU}),
\begin{align*}
A_t^{(n)}:=\left( \int_0^t |Y_s^{(n)}|^{1+2\alpha}ds  \right)^{1/2},\:\:\:
B_t^{(n)}:=\left( \int_0^t |U_s^{(n)}	|^{1+2\alpha}ds  \right)^{1/2}.
\end{align*}
From Lemma \ref{Lem_1} with $q=1+2\alpha$ and using Jensen's inequality, we have 
\begin{align}\label{esti_B}
      \e[ B_t^{(n)}]
\leq  \left( \int_0^T \e\left[ |U_s^{(n)}|^{1+2\alpha} \right] ds  \right)^{1/2} 
\leq \frac{C}{n^{1/4+\alpha/2}}.
\end{align}
Next we estimate $A_t^{(n)}$ and $V_t^{(n)} = \sup_{0\leq s \leq t}|Y_s^{(n)}|$ by the following two cases.

Case $1$ ($d\geq 2$ and $\alpha = 1/2$): 
Since $|Y^{(n)}_s| \leq V_t^{(n)}$ for all $s \leq t$,  we have 
\begin{align*}
      &\e[ A_t^{(n)}]
 =    \e\left[ \left( \int_0^t |Y_s^{(n)}|^{1+2\alpha}ds  \right)^{1/2} \right]
 \leq \e\left[ (V_t^{(n)})^{1/2}\left( \int_0^t |Y_s^{(n)}|^{2\alpha}ds  \right)^{1/2} \right].
\end{align*}
Using Young's inequality $xy \leq \frac{x^2}{2\widetilde{C}} + \frac{\widetilde{C} y^2}{2}$, for any $x,y \geq 0$ and Theorem \ref{Main_1}, as $\alpha =1/2$, we get
\begin{align}\label{d_esti_A}
      \e[ A_t^{(n)}]
 \leq \frac{1}{2\widetilde{C}}\e[V_t^{(n)}] + \frac{\widetilde{C}}{2} \int_0^T \e[|Y_s^{(n)}|]ds
 \leq \frac{1}{2\widetilde{C}}\e[V_t^{(n)}] + \frac{C}{\sqrt{n}}.
\end{align}
Therefore as $\beta \geq 1/2$, we have using (\ref{sup_M}), (\ref{esti_B}) and (\ref{d_esti_A}),
\begin{align*}
     \e\left[\sup_{0\leq s \leq t} \left|M_s^{\delta, \varepsilon, n}\right|\right]
\leq \frac{1}{2} \e[V_t^{(n)}] + C \left\{ \frac{1}{\sqrt{n}} + \frac{1}{n^{\beta}} \right\}
\leq \frac{1}{2} \e[V_t^{(n)}] + \frac{C}{\sqrt{n}}.
\end{align*}
Taking supremum in (\ref{d_ineq_1}) with $\tau =T$, we obtain
\begin{align}\label{d_ineq_3} 
     V_t^{(n)}
\leq C\int_0^t V_s^{(n)} ds &+ \sum_{i=1}^d \int_0^{T} \left| b^{(i)}(s,X_s^{(n)}) -  b^{(i)}(s,X_{\eta _n(s)}^{(n)}) \right|ds  \nonumber\\
&+ \frac{C}{n^{\beta}} +\sup_{0\leq s \leq t}|M_s^{\delta, \varepsilon, n}| + R(1/2, \delta, \varepsilon, n) + S(1/2, \delta, \varepsilon, n). 
\end{align}
From (\ref{RS_est}), (\ref{d_esti_A}) and (\ref{d_ineq_3}), we have
\begin{align*}
      \e[V_t^{(n)} ]
&\leq C\int_0^t\e[V_s^{(n)}]ds + \frac{C}{\sqrt{n}}
\end{align*}
From Gronwall's inequality we have 
\begin{align*}
       \e[V_t^{(n)} ] \leq \frac{C}{\sqrt{n}}.
\end{align*}

Case $2$ ($d=1$): For $\alpha \in (0,1/2]$, by using the same method as in Case $1$, we have that (\ref{d_esti_A}) becomes
\begin{align}\label{esti_A}
      \e[ A_t^{(n)}]
 \leq \frac{1}{2\widetilde{C}}\e[V_t^{(n)}] + \frac{\widetilde{C}}{2} \int_0^T \big(\e[|Y_s^{(n)}|]\big)^{2\alpha}ds
 \leq \frac{1}{2\widetilde{C}}\e[V_t^{(n)}] + \frac{C}{n^{2\alpha^2}}.
\end{align}
Therefore from (\ref{sup_M}), using (\ref{esti_B}) and (\ref{esti_A}) we obtain
\begin{align}\label{1_sup_M}
     \e\left[\sup_{0\leq s \leq t} \left|M_s^{\delta, \varepsilon, n}\right|\right]
\leq \frac{1}{2} \e[V_t^{(n)}] + C \left\{ \frac{1}{n^{2\alpha^2}} + \frac{1}{n^{1/4+\alpha/2}} + \frac{1}{n^{\beta}} \right\}
\leq \frac{1}{2} \e[V_t^{(n)}] + \frac{C}{n^{2\alpha^2}}
\end{align}
Taking supremum in (\ref{ineq_1}) with $\tau =T$, we have
\begin{align}\label{ineq_3} 
      V_t^{(n)}
&\leq  C\int_0^tV_s^{(n)}ds + \int_0^T | b(s,X_s^{(n)}) - b(s,X_{\eta_n(s)}^{(n)})|ds + \frac{C}{n^{\beta}} + \sup_{0\leq s \leq t}|M_s^{\delta, \varepsilon, n}| + R(\alpha, \delta, \varepsilon, n).
\end{align}
Therefore by using (\ref{RS_est}), (\ref{1_sup_M}) and applying Gronwall's inequality we have
\begin{align*}
       \e[V_t^{(n)} ] \leq \frac{C}{n^{2\alpha^2}}.
\end{align*}
For $\alpha = 0$, it follows from Theorem \ref{Main_1} that we have 
\begin{align*}
      \e[ A_t^{(n)}]
\leq  \left( \int_0^T \e\left[ |Y_s^{(n)}| \right] ds  \right)^{1/2} \leq \frac{C}{\sqrt{\log n}}.
\end{align*}
Therefore from (\ref{ineq_3}) and applying Gronwall's inequality we have 
\begin{align*}
\e[V_t^{(n)} ] \leq  \frac{C}{\sqrt{\log n}}.
\end{align*}
Hence we  finish the proof of Theorem \ref{Main_2}.

\subsection{Proof of Theorem \ref{Main_3}}
To prove Theorem \ref{Main_3}, we introduce the following Gronwall type inequality.	
\begin{Lem}[\cite{Gyongy} Lemma 3.2.] \label{Lem2_1}\rm 
Let $(Z_t)_{t \geq 0}$ be a nonnegative continuous stochastic process and set $V_t:= \sup_{s\leq t}Z_s$. Assume that for some $r>0$, $q\geq 1$, $\rho \in [1,q]$ and some constants $C_0$ and $\xi \geq 0$, 
\begin{align*}
\e[V_t^r] \leq C_0 \e\left[ \left( \int_0^t V_s ds \right)^r\right] + C_0 \e\left[ \left( \int_0^t Z^{\rho}_s ds \right)^{r/q} \right] +\xi < \infty
\end{align*}
for all $t\geq 0$. Then for each $T \geq 0$ the following statements hold.\\
(i) If $\rho=q$ then there exists a constant $C_1$ depending on $C_0,T,q$ and $r$ such that
\begin{align*}
\e[V_T^r] \leq C_1 \xi.
\end{align*}
(ii) If $r\geq q$ or $q+1-\rho < r < q $ hold, then there exists constant $C_2$ depending on $C_0, T, \rho, q $ and $r$, such that
\begin{align*}
\e[V_T^r] \leq C_2 \xi + C_2 \int_0^T\e[Z_s]ds.
\end{align*}
\end{Lem}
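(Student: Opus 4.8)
The plan is to reduce each of (i) and (ii) to a scalar Gronwall inequality for the deterministic, non-decreasing, and (by hypothesis) finite function $f(t):=\e[V_t^r]$. Two elementary facts will be used repeatedly: first, since $0\le Z_s\le V_s\le V_t$ for $s\le t$, one has $\int_0^t Z_s^{\gamma}\,ds\le V_t^{\gamma-1}\int_0^t Z_s\,ds$ and $\int_0^t V_s^{\gamma}\,ds\le V_t^{\gamma-1}\int_0^t V_s\,ds$ for any $\gamma\ge 1$; second, the weighted Young inequality $ab\le\theta a^{p}+C_{\theta}b^{p'}$ (with $\tfrac1p+\tfrac1{p'}=1$), which lets me trade a power of the ``running supremum'' $V_t$ against a power of a running integral and then absorb a small multiple of $f(t)$ into the left-hand side --- legitimate precisely because $f(t)<\infty$.

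For part (i), where $\rho=q$: I would bound the middle term $C_0\e[(\int_0^t Z_s^{q}\,ds)^{r/q}]$ via the first fact with $\gamma=q$, namely $(\int_0^t Z_s^{q}\,ds)^{r/q}\le V_t^{r(q-1)/q}(\int_0^t V_s\,ds)^{r/q}$, and then apply Young with the conjugate pair $(\tfrac{q}{q-1},q)$, so that $V_t$ appears to the power $r$ and the integral to the power $r$. Choosing the Young parameter small and absorbing, the hypothesis becomes $f(t)\le C'\e[(\int_0^t V_s\,ds)^{r}]+2\xi$. If $r\ge 1$, Jensen's inequality gives $(\int_0^t V_s\,ds)^{r}\le T^{r-1}\int_0^t V_s^{r}\,ds$ and the classical Gronwall lemma yields $f(T)\le C_1\xi$. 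If $0<r<1$, I would instead use $(a+b)^{r}\le a^{r}+b^{r}$ and $\int_0^t V_s\,ds\le tV_t$ on a partition of $[0,T]$ into finitely many intervals short enough to make the coefficient of $f(t)$ at most $\tfrac12$; iterating the recursion $f(s_{j+1})\le Cf(s_j)+C\xi$ starting from $f(0)=\e[Z_0^{r}]\le\xi$ (the hypothesis at $t=0$) again gives $f(T)\le C_1\xi$ with $C_1=C_1(C_0,T,q,r)$.

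For part (ii), the target inhomogeneity is the \emph{linear} quantity $\xi+\int_0^T\e[Z_s]\,ds$, and the new ingredient is the pointwise interpolation coming from Young's inequality: since $1\le\rho\le q$, for every $\theta>0$ there is $C_{\theta}$ with $Z_s^{\rho}\le\theta Z_s^{q}+C_{\theta}Z_s$, \emph{with no additive constant}. Integrating and raising to the power $r/q$ splits the middle term into a multiple of $(\int_0^t Z_s^{q}\,ds)^{r/q}$ --- handled exactly as in part (i), contributing $\theta'f(t)+C\int_0^t f(s)\,ds$ after absorption --- plus a multiple of $(\int_0^t Z_s\,ds)^{r/q}$. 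For this last term I would use the first fact (with $Z$) together with $\int Z\le\int V$ to rewrite it as a product of a power of $V_t$, a power of $\int_0^t V_s\,ds$, and the \emph{single} power $\int_0^t Z_s\,ds$, and then apply Young once more to reach $\theta''\e[V_t^{r}]+C\e[(\int_0^t V_s\,ds)^{r}]+C\int_0^T\e[Z_s]\,ds$. In part (ii) the hypotheses force $r\ge 1$, so the Gronwall step closes as before and gives $f(T)\le C_2(\xi+\int_0^T\e[Z_s]\,ds)$.

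The routine parts are the bookkeeping with Young's and H\"older's inequalities and the final Gronwall step. The hard part will be making that last estimate in part (ii) close with the $Z$-dependence genuinely linear --- i.e. recovering $\int_0^t Z_s\,ds$ with exponent exactly $1$ and no spurious additive constant --- since a naive single Young application leaves a super-linear power of $\int_0^t Z_s\,ds$. This is precisely where the restrictions ``$r\ge q$'' or ``$q+1-\rho<r<q$'' must be exploited to keep all intermediate H\"older/Young exponents admissible; I expect this to require peeling off unit powers via a short induction on $r$ (or on the number of Young iterations), with the two allowed ranges of $r$ serving as the base and inductive cases.
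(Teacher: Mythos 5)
The paper does not prove this lemma at all --- it is quoted verbatim from Gy\"ongy--R\'asonyi \cite{Gyongy} (their Lemma 3.2) --- so the only meaningful comparison is with the standard proof there. Your part (i) is essentially that proof and is correct: the factorization $\int_0^t Z_s^q\,ds\le V_t^{q-1}\int_0^t V_s\,ds$, Young with the conjugate pair $\bigl(\tfrac{q}{q-1},q\bigr)$, absorption of the small multiple of $\e[V_t^r]$ (legitimate because the hypothesis includes $\e[V_t^r]<\infty$), then Jensen plus Gronwall for $r\ge1$, and for $r<1$ your partition iteration $f(s_{j+1})\le Cf(s_j)+C\xi$ starting from $f(0)=\e[Z_0^r]\le\xi$ does close.

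Part (ii), however, has a genuine gap, and it sits exactly where you flagged ``the hard part.'' Your opening move $Z_s^\rho\le\theta Z_s^q+C_\theta Z_s$ is true but destroys the structure you need: it leaves the term $\e\bigl[\bigl(\int_0^t Z_s\,ds\bigr)^{r/q}\bigr]$, and in the regime $q+1-\rho<r<q$ one has $r/q<1<r$, so no inequality of the form $\bigl(\int_0^t Z_s\,ds\bigr)^{r/q}\le\varepsilon V_t^r+C\bigl(\int_0^t V_s\,ds\bigr)^r+C\int_0^t Z_s\,ds$ can hold (take $Z\equiv z$ constant and let $z\downarrow0$: the left side is of order $z^{r/q}$ while the right side is $O(z)$). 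Your proposed repair --- rewrite it as a product of a power of $V_t$, a power of $\int_0^t V_s\,ds$ and a single power of $\int_0^t Z_s\,ds$, then Young --- cannot work either, since Young applied to any product having $\int_0^t Z_s\,ds$ as a factor returns that factor with exponent strictly greater than one (an inequality $A^aB\le\varepsilon A^r+CB$ for all $A,B\ge0$ with $a>0$ is false: fix $A$ with $A^a>C$ and let $B\to\infty$). Tellingly, the hypothesis $r>q+1-\rho$ never enters your scheme. The actual argument postpones any splitting: from $\int_0^t Z_s^\rho\,ds\le V_t^{\rho-1}\int_0^t Z_s\,ds$, Young with conjugate exponents $\tfrac{q}{\rho-1}$ and $\tfrac{q}{q+1-\rho}$ gives $\varepsilon V_t^r+C_\varepsilon\bigl(\int_0^t Z_s\,ds\bigr)^{\kappa}$ with $\kappa=\tfrac{r}{q+1-\rho}$; the case-(ii) hypotheses are precisely what guarantee $1\le\kappa\le r$ (and $r\ge1$), so the scalar interpolation $x^{\kappa}\le\varepsilon x^r+C_\varepsilon x$ applied to $x=\int_0^t Z_s\,ds$, combined with $x^r\le\bigl(\int_0^t V_s\,ds\bigr)^r$, yields the genuinely linear term $C\int_0^T\e[Z_s]\,ds$, after which Jensen and Gronwall finish as in (i). In the sub-case $r\ge q$ your decomposition is salvageable by this same scalar interpolation (then $1\le r/q\le r$); it is the sub-case $q+1-\rho<r<q$ that your route cannot reach, and no induction on $r$ will change that.
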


\begin{proof}[Proof of Theorem \ref{Main_3}]
Let $p \geq 2$. First we estimate the expectation of $\sup_{0\leq s \leq t}|M_s^{\delta,\varepsilon,n}|^p$.
By using (\ref{def_M}) and Burkholder-Davis-Gundy's inequality, for any $\delta \in (1,\infty) $ and $\varepsilon \in (0,1)$,	 we have
\begin{align} 
      \e\left[\sup_{0\leq s \leq t}|M_s^{\delta,\varepsilon,n}|^p\right]
&\leq C\e[\langle M^{\delta,\varepsilon,n} \rangle_t^{p/2} ] \nonumber\\
&\leq C\e\left[\left(\int_0^t |Y_s^{(n)}|^{1+2\alpha} + |U_s^{(n)}|^{1+2\alpha } + \left| s - \eta_n(s) \right|^{2 \beta} ds \right)^{p/2} \right]\nonumber\\
&\leq C\e\left[ \left( \int_0^t |Y_s^{(n)}|^{1+2\alpha} ds \right)^{p/2} \right] + \frac{C}{n^{p/4+p\alpha /2}} + \frac{C}{n^{p \beta}} \label{sup_Mp_1} \\
&\leq C\e\left[ \left( \int_0^t |Y_s^{(n)}|^{1+2\alpha} ds \right)^{p/2} \right]+ \frac{C}{n^{p\alpha}}\label{sup_Mp_2}.  
\end{align}
Now we estimate the expectation of $(V_t^{(n)})^p$.

Case $1$ ($d \geq 2$ and $\alpha =1/2$): We choose $\delta =2$ and $\varepsilon = n^{-1/2}$. From (\ref{d_ineq_3}), by using the inequality $(\sum_{i=1}^{m} a_i)^q \leq m^{(q-1)\vee 0} \sum_{i=1}^{m} a_i^q$ for any $m\in \n, \ a_i \geq 0$ and $q >0$, we have
\begin{align}\label{Vp_d}
      (V_t^{(n)})^p
 \leq C \Bigg\{\left( \int_0^t V_s^{(n)} ds \right)^p &+ \sum_{i=1}^d\int_0^T | b^{(i)}(s,X_s^{(n)}) - b^{(i)}(s,X_{\eta_n(s)}^{(n)})|^p ds \nonumber\\
&   +     \frac{1}{n^{p\beta}} + \sup_{0\leq s \leq t}|M_s^{\delta, \varepsilon, n}|^p + R^p(1/2, \delta,\varepsilon,n) + S^p(1/2, \delta,\varepsilon,n) \Bigg\}.
\end{align}
In the same way as in (\ref{RS_est}), for any $\alpha \in (0,1/2]$ we have
\begin{align}\label{p_RS}
\e[R^p(\alpha, 2, n^{-1/2}, n)],\; \e[S^p(\alpha, 2 ,n^{-1/2}, n)] \leq Cn^{-p \alpha}.
\end{align}
Using Lemma \ref{Lem} with $q=p$, (\ref{sup_Mp_2}), (\ref{Vp_d}) and (\ref{p_RS}) we have
\begin{align*}
      \e [(V_t^{(n)})^p]
&\leq C \e \left[ \left( \int_0^t V_s^{(n)} ds \right)^p \right] + C\e\left[ \left( \int_0^t |Y_s^{(n)}|^{2} ds \right)^{p/2} \right]+ \frac{C}{\sqrt{n}}.
\end{align*}
From Lemma \ref{Lem2_1} (i) with $r=p$, $\rho = q =2 $ and $\xi = Cn^{-1/2}$, we obtain 
\begin{align*}
      \e [(V_t^{(n)})^p]
&\leq \frac{C}{\sqrt{n}}.
\end{align*}

Case $2$ ($d=1$): From (\ref{ineq_3}), we have
\begin{align}\label{Vp_1}
      (V_t^{(n)})^p
 \leq C \Bigg\{ \left( \int_0^t V_s^{(n)} ds \right)^p &+ \int_0^T | b(s,X_s^{(n)}) - b(s,X_{\eta_n(s)}^{(n)})|^p ds \nonumber\\
&+     \frac{1}{n^{p\beta }} + \sup_{0\leq s \leq t}|M_s^{\delta,\varepsilon,n}|^p + R^p(\alpha, \delta,\varepsilon,n) \Bigg\}.
\end{align}
For $\alpha =1/2$, we can show the statement in the same way as Case $1$.\\
For $\alpha \in (0,1/2)$, we also take $\delta =2$ and $\varepsilon = n^{-1/2}$.
By using (\ref{sup_Mp_2}), (\ref{p_RS}) and Lemma \ref{Lem} with $q=p$ we have
\begin{align*}
      \e [(V_t^{(n)})^p]
&\leq C \e \left[ \left( \int_0^t V_s^{(n)} ds \right)^p \right] + C\e\left[ \left( \int_0^t |Y_s^{(n)}|^{1+2\alpha} ds \right)^{p/2} \right]+ \frac{C}{n^{p\alpha}} + \frac{C}{\sqrt{n}}.
\end{align*}
From Lemma \ref{Lem2_1} (ii) with $r=p$, $q=2$, $\rho = 1+2\alpha$ and $\xi = C n^{-p \alpha} + Cn^{-1/2}$, we have 
\begin{align*}
      \e [(V_t^{(n)})^p]
&\leq \frac{C}{n^{p\alpha}} + \frac{C}{\sqrt{n}} + C \int_0^T \e[|Y_s^{(n)}|] ds \leq C\left\{ \frac{1}{n^{p\alpha}} + \frac{1}{\sqrt{n}} + \frac{1}{n^{\alpha}} \right\} \leq \frac{C}{n^{\alpha}}.
\end{align*}
For $\alpha =0$, we choose $\delta = n^{1/3}$ and $\varepsilon = (\log n)^{-1}$. 
In the same way as in (\ref{R_0}), we have
\begin{align*}
\e[R^p(0, n^{1/3},(\log n)^{-1},n)] \leq \frac{C}{(\log n)^p}.
\end{align*}
Using Lemma \ref{Lem} with $q=p$, (\ref{sup_Mp_1}),(\ref{p_RS}) and (\ref{Vp_1}) we obtain
\begin{align*}
      \e [(V_t^{(n)})^p]
&\leq C \e \left[ \left( \int_0^t V_s^{(n)} ds \right)^p \right] + C\e\left[ \left( \int_0^t |Y_s^{(n)}| ds \right)^{p/2} \right]+ \frac{C}{(\log n)^p}.
\end{align*}
From Lemma \ref{Lem2_1} (ii) with $r=p$, $q=2$, $\rho = 1$ and $\xi = C (\log n)^{-p} $, we have
\begin{align*}
      \e [(V_t^{(n)})^p]
&\leq \frac{C}{(\log n)^p} + C \int_0^T \e[|Y_s^{(n)}|]ds \leq \frac{C}{\log n}.
\end{align*}
Hence the proof of the theorem is complete.
\end{proof}

\subsection{Proof of Corollaries \ref{Main_4} and \ref{Main_5}}

Using the same argument as in (\cite{Lemaire}, Theorem 2.1), one can establish the Gaussian bound for the density of $\tilde{X}_t^{(n)}$ for $d=1$, and for the density of $\overline{X}_t^{(n)}$ for $d \geq 1$. Hence  we can prove Corollaries \ref{Main_4} and \ref{Main_5} by using the same method as in sections 3.4 and 3.5.

\subsection{Proof of Proposition \ref{exp_1}} \label{Appe}
(i). \  It is easy to prove that $\mathcal{A}$ is a vector space over $\real$.

\noindent
(ii). \ 
Let $g: [0,T]\times \mathbb{R}^d \to \mathbb{R}$ be a bounded measurable function and $g(t,\cdot): \mathbb{R}^d \to \mathbb{R}$ is monotone in each variable separately.  Let $\rho(x)$ be a density function of the $d$-dimensional standard normal distribution, i.e. $\rho(x):= e^{-|x|^2/2}/(2\pi)^{d/2}$ and a sequence $(\rho_N)_{N \in \n}$ be defined by $\rho_N(x):=N^d\rho(Nx)$.
Finally, we set $g_N(t,x):=\int_{\real^d}g(t,y)\rho_N(x-y)dy$ and $\|g\|_{\infty} := \sup_{t \in [0,T], x \in \mathbb{R}^d}|g(t,x)|$. We will show that $(g_N)$ is a $\mathcal{A}$-approximation sequence of $g$. Indeed,  since $\int_{\mathbb{R}^d}\rho_N(y)dy = 1$, we have $|g_N(t,x)| \leq \|g\|_\infty$. Thus $(g_N)$ satisfies $\mathcal{A}$(ii). Moreover,
for any $L>0$, we have
\begin{align*}
      \int_{|x|\leq L}|g_N(t,x)-g(t,x)|dx
&\leq \int_{|x|\leq L}dx\int_{\real^d}dy|g(t,y)-g(t,x)|\rho_N(x-y)\\
&=    \int_{|x|\leq L}dx\int_{\real^d}dz|g(t,x-z)-g(t,x)|\rho_N(z)\\
&=    \int_{\real^d} dz\int_{|x|\leq L}dx|g(t,x-\frac zN)-g(t,x)|\rho(z).
\end{align*}
For each $z \in \real^d$, we write $z = (z_1, \cdots, z_d)^{*}$, $z^{(0)}=0$ and $z^{(k)} = (z_1, \cdots, z_k, 0, \cdots, 0)^{*}$ for $k=1, \cdots, d$. We have 
\begin{align*}
&\int_{|x|\leq L}|g(t,x-\frac zN)-g(t,x)|dx \\
&\leq \sum_{k=1}^d \int_{|x_1|\leq L}dx_1\cdots \int_{|x_{k-1}|\leq L}dx_{k-1} \int_{|x_{k+1}|\leq L}dx_{k+1} \cdots \int_{|x_d|\leq L} dx_d \\
& \times \int_{|x_k|\leq L} \left|g(t,x-\frac{z^{(k)}}{N}) - g(t, x-\frac{z^{(k-1)}}{N})\right|dx_k.
\end{align*}
Since $g(t,\cdot)$ is monotone in each variable, 
\begin{align*}
   \int_{|x_k|\leq L} \left|g(t,x-\frac{z^{(k)}}{N}) - g(t, x-\frac{z^{(k-1)}}{N})\right|dx_k
&= \left|\int_{|x_k|\leq L} \Big(g(t,x-\frac{z^{(k)}}{N}) - g(t, x-\frac{z^{(k-1)}}{N})\Big)dx_k\right|.
\end{align*}
By the change of variable, we have
\begin{align*}
\left|\int_{|x_k|\leq L} \Big(g(t,x-\frac{z^{(k)}}{N}) - g(t, x-\frac{z^{(k-1)}}{N})\Big)dx_k\right|
&=  \left| \left( \int_{-L-z_k/N}^{-L} + \int_{L-z_k/N}^{L} \right) g(t, x-\frac{z^{(k-1)}}{N}) dx_k \right|\\
& \leq \frac{2|z_k|\|g\|_{\infty}}{N}. 
\end{align*}
Therefore
\begin{align*}
&\sup_{t \in [0,T]}\int_{|x|\leq L}|g(t,x-\frac zN)-g(t,x)|dx
 \leq  \sum_{k=1}^d \frac{2|z_k|\|g\|_{\infty}L^{d-1}}{N}. 
\end{align*}
This implies that
\begin{align*}
      \sup_{t \in [0,T]} \int_{|x|\leq L}|g_N(t,x)-g(t,x)|dx 
\leq \int_{\real^d} \sum_{k=1}^d \frac{2|z_k|\|g\|_{\infty}L^{d-1}}{N} \rho(z)dz \to 0. 
\end{align*}
as $N\to \infty$. Thus $(g_N)_{N \in \n}$ satisfies $\mathcal{A}$(i).


Since $g(t,\cdot)$ is a monotone function in each variable separately, so is $g_N(t,\cdot)$. Using the integration by parts formula, we have
\begin{align*}
      &\int_{\real^d} |\partial_i g_N(t,x+a)| \frac{e^{-|x|^2/u}}{u^{(d-1)/2}}dx
 =    \Big| \int_{\real^{d-1}}dx_1 \cdots dx_{i-1} dx_{i+1} \cdots dx_d \int_{\real} dx_i \partial_i g_N(t,x+a) \frac{e^{-|x|^2/u}}{u^{(d-1)/2}} \Big|\\
& \leq \int_{\real^d} |g_N(t,x+a)| \frac{2|x_i|}{u} \frac{e^{-|x|^2/u}}{u^{(d-1)/2}}dx
\leq \int_{\real^d}  \frac{2\|g\|_\infty |x|}{\sqrt{u}} \frac{e^{-|x|^2/u}}{u^{d/2}}dx = 2\|g\|_\infty \int_{\mathbb{R}^d}|y|e^{-|y|^2} dy,
\end{align*}
where we use the change of variable $y = x/\sqrt{u}$ in the last equation.
This concludes $(g_N)_{N \in \n}$ satisfies $\mathcal{A}$(iii).

\noindent
(iii). \ 
Let $(g_N)$ be defined as in (ii). For each $L>0$, since $g$ is Lipschitz continuous, we have
\begin{align*}
      \int_{|x|\leq L}|g_N(t,x)-g(t,x)|dx
&\leq \int_{|x|\leq L}dx\int_{\real^d}dy|g(t,y)-g(t,x)|\rho_N(x-y)\\
&\leq C\int_{|x|\leq L}dx\int_{\real^d}dy|y-x|\rho_N(x-y)\\
&=    C\int_{|x|\leq L}dx\int_{\real^d}dz \frac{|z|e^{-\frac{z^2}{2}}}{N}
\leq  \frac{C}{N} \to 0,
\end{align*}
as $N \to \infty$.
This implies $(g_N)$ satisfying $\mathcal{A}$(i). It is straightforward to verify $(g_N)$ satisfying $\mathcal{A}$(ii). To check $\mathcal{A}$(iii), we note that from the fact $\partial_i \rho_N(x)=-N^{d+2} x_i \rho(Nx)$ and Lipschitz property of $g$, 
\begin{align*}
     |\partial_i g_N(t,x)|
&=  \Big| \int_{\real^d} g(t,y) \partial_i \rho_N(x-y)dy\Big|
=  \Big| \int_{\real^d} \{ g(t,y) - g(t,x) \}  \partial_i \rho_N(x-y)dy\Big|\\
&\leq  \int_{\real^d} N^{d+2} | y - x ||y_i-x_i| \frac{e^{-\frac{N^2|y-x|^2}{2}}}{(2\pi)^{d/2}} dy.
\end{align*}
The change of variable $x=y+ z/N$ implies that 
\begin{align*}
\int_{\real^d} N^{d+2} | y - x ||y_i-x_i| \frac{e^{-\frac{N^2|y-x|^2}{2}}}{(2\pi)^{d/2}} dy \leq \int_{\real^d} |z| \frac{e^{-|z|^2/2}}{(2\pi)^{d/2}}dz = C<\infty.
\end{align*}
Hence for any $a \in \real^d$ and $u>0$, 
\begin{align*}
    \sup_{N \in \n} \sum_{i=1}^d \int_{\real^d}\left| \partial_i g_N(t,x+a) \right| \frac{e^{-\frac{|x|^2}{u}}}{u^{(d-1)/2}}dx
\leq C \int_{\real^d} \frac{e^{-\frac{|x|^2}{u}}}{u^{(d-1)/2}}dx
\leq C \sqrt{u}
\end{align*}
holds with constant $C$ which is independent of $a$ and $u$.
This concludes $(g_N)_{N \in \n}$ satisfying $\mathcal{A}$(iii).

\textbf{Acknowledgment.}
The authors are very grateful to Professor Arturo Kohatsu-Higa for suggesting this problem, and fruitful discussions and thank to Aur\'elien Alfonsi and  our Laboratory members for good advice. We also thank the referees for their comments which improve the readability of the paper.




\end{document}